\documentclass[11pt,noinfoline]{article}

\RequirePackage[OT1]{fontenc}
\RequirePackage[
amsthm,amsmath
]{imsart}

\usepackage{graphicx}
\usepackage{latexsym,amsmath}
\usepackage{amsmath,amsthm,amscd}
\usepackage{amsfonts}
\usepackage[psamsfonts]{amssymb}

\usepackage{mathabx} 

\usepackage{enumerate}

\usepackage{url}

\usepackage{tocvsec2}

\usepackage{epstopdf}
\usepackage{wrapfig}
\usepackage{float}
\usepackage[small]{caption}
\usepackage{hyperref}



\setlength{\hoffset}{0in}
\setlength{\oddsidemargin}{0in}
\setlength{\evensidemargin}{0in}
\setlength{\textwidth}{6.5in}
\setlength{\paperwidth}{8.5in}

\setlength{\voffset}{-0.15in}
\setlength{\topmargin}{0in}
\setlength{\headheight}{0.06in}
\setlength{\headsep}{0.09in}
\setlength{\textheight}{9in}

\setlength{\paperheight}{11in}

\startlocaldefs
\numberwithin{equation}{section}

\theoremstyle{plain} 
\newtheorem{theorem}{Theorem}[section]

\newtheorem{proposition}[theorem]{Proposition}

\theoremstyle{definition} 

\theoremstyle{definition} 

\newtheorem*{ex*}{Example}
\theoremstyle{remark} 

\theoremstyle{remark} 

\newtheorem*{remark*}{Remark}

\numberwithin{equation}{section}

\makeatletter
\def\subsubsubsection{\@startsection{subsubsubsection}{4}{\z@}{-3.25ex plus -1ex minus -.2ex}{1.5ex plus .2ex}{\normalsize}}
\makeatother                                   
 
\newcommand{\beqa}{\begin{eqnarray}}
\newcommand{\eeqa}{\end{eqnarray}}
 
\newcommand{\bseq}{\begin{subequations}}
 
\newcommand{\eseq}{\end{subequations}}

\newcommand{\dd}{\partial}

\renewcommand{\dd}{{\,\operatorname{d}}}

\newcommand{\sign}{\operatorname{sign}}
\newcommand{\supp}{\operatorname{supp}}
\newcommand{\card}{\operatorname{card}}

\newcommand{\pv}{\operatorname{p{.}v{.}}}

\newcommand{\al}{\alpha}

\newcommand{\Ga}{\Gamma}
\newcommand{\si}{\sigma}

\newcommand{\ka}{\kappa}
\newcommand{\la}{\lambda}
\newcommand{\ga}{\gamma}
\newcommand{\de}{\delta}
\newcommand{\be}{\beta}
\newcommand{\De}{\Delta}
\newcommand{\La}{\Lambda}

\renewcommand{\th}{\theta}
\renewcommand{\Psi}{\overline{\Phi}}

\newcommand{\ffrown}{\text{\raisebox{3pt}[0pt][0pt]{$\frown$}}}

\renewcommand{\O}{\underset{\ffrown}{<}}

\renewcommand{\u}{\mathsf{u}}
\renewcommand{\nu}{\mathsf{nu}}

\newcommand{\ii}{\operatorname{I}}

\renewcommand{\P}{\operatorname{\mathsf{P}}} 
\newcommand{\PP}{\operatorname{\mathsf{P}}} 
\newcommand{\E}{\operatorname{\mathsf{E}}}

\newcommand{\lc}{\mathsf{L\!C}}

\newcommand{\R}{\mathbb{R}}
\newcommand{\N}{\mathbb{N}}
\newcommand{\C}{\mathbb{C}}
\newcommand{\CC}{\mathbb{C}}

\newcommand{\EE}{\mathcal{E}}

\newcommand{\GG}{\mathfrak{G}}

\newcommand{\vp}{\varepsilon}

\newcommand{\tp}{{\tilde{p}}}

\newcommand{\tPi}{{\tilde{\Pi}}}

\newcommand{\tS}{{\tilde{S}}}

\newcommand{\tX}{{\tilde{X}}}

\newcommand{\tF}{{\tilde{F}}}

\renewcommand{\le}{\leqslant}
\renewcommand{\ge}{\geqslant}

 \pagenumbering{arabic}
 
\newcommand{\ol}{\overline}

\renewcommand{\cdot}{\#}

\newcommand{\fn}{f_{1n}}
\newcommand{\fnj}[1]{f_{1n}^{(#1)}}
\newcommand{\gn}{g_{1n}}
\newcommand{\gnj}[1]{g_{1n}^{(#1)}}
\newcommand{\hn}{h_{1n}}
\newcommand{\hnj}[1]{h_{1n}^{[#1]}}

\newcommand{\dnj}[1]{d_{1n}^{(#1)}}

\renewcommand{\Re}{\operatorname{\mathrm{Re}}}
\renewcommand{\Im}{\operatorname{\mathrm{Im}}}

\endlocaldefs

\begin{document}

\begin{frontmatter}

\title{On the nonuniform Berry--Esseen bound}
\runtitle{Nonuniform Berry--Esseen}

%

\begin{aug}
\author{\fnms{Iosif} \snm{Pinelis}\thanksref{t2}\ead[label=e1]{ipinelis@mtu.edu}}
  \thankstext{t2}{Supported by NSA grant H98230-12-1-0237}
\runauthor{Iosif Pinelis}


\address{Department of Mathematical Sciences\\
Michigan Technological University\\
Houghton, Michigan 49931, USA\\
E-mail: \printead[ipinelis@mtu.edu]{e1}}
\end{aug}

\begin{abstract}
Due to the effort of a number of authors, the value $c_\u$ of the absolute constant factor in the uniform Berry--Esseen (BE) bound for sums of independent random variables 
has been gradually reduced to 
$0.4748$ in the iid case and $0.5600$ in the general case; both these values were recently obtained by Shevtsova. On the other hand, Esseen had shown that $c_\u$ cannot be less than $0.4097$. 
Thus, the gap factor between the best known upper and lower bounds on (the least possible value of) $c_\u$ is now rather close to 1. 

The situation is quite different for the absolute constant factor $c_\nu$ in the corresponding nonuniform BE bound. 
Namely, the best correctly established upper bound on $c_\nu$ in the iid case is 
over $25$ times the corresponding best known lower bound, and this gap factor is greater than 
$31$ in the general case.  
%
In the present paper, improvements to the prevailing method (going back to S.\ Nagaev) of obtaining nonuniform BE bounds are suggested. 
Moreover, a new method is presented, of a rather purely Fourier kind, based on a family of smoothing inequalities, which work better in the tail zones. As an illustration, a quick proof of Nagaev's nonuniform BE bound is given. Some further refinements in the application of the method are shown as well. 
\end{abstract}

  
%

\setattribute{keyword}{AMS}{AMS 2010 subject classifications:}

\begin{keyword}[class=AMS]
\kwd
{60E15}
\kwd{62E17}
\end{keyword}


\begin{keyword}
\kwd{Berry--Esseen bounds}
\kwd{rate of convergence to normality}
\kwd{probability inequalities}
\kwd{smoothing inequalities}
\kwd{sums of independent random variables}
\end{keyword}

\end{frontmatter}

\settocdepth{chapter}

\tableofcontents 

\settocdepth{subsubsection}

\theoremstyle{plain} 
\numberwithin{equation}{section}


\section{Uniform and nonuniform Berry--Esseen (BE) bounds}\label{intro} 

Suppose that $X_1,\dots,X_n$ are independent zero-mean 
r.v.'s, with 
\begin{equation*}
	S:=X_1+\dots+X_n,\ A:=\sum\E|X_i|^3<\infty,\ \text{and}\  B:=\sqrt{\sum\E|X_i|^2}>0. 
\end{equation*}
Consider  
\begin{equation*}
	\De(z):=\textstyle{|\P(S>Bz)-\P(Z>z)|}\quad\text{and}\quad r_L:=A/B^3, 
\end{equation*}
where $Z\sim N(0,1)$ and $z\ge0$; of course, $r_L$ is the so-called Lyapunov ratio. 
Note that, in the ``iid'' case (when the $X_i$'s are iid), $r_L$ will be on the order of $1/\sqrt n$. 

In such an iid case, let us also assume that 
$\E X_1^2=1$. 

Uniform and nonuniform BE bounds are upper bounds on $\De(z)$ of the forms  
\begin{equation}\label{eq:BE bounds}
	c_\u\,r_L\quad\text{and}\quad c_\nu\,\frac{r_L}{1+z^3}, 
\end{equation}
respectively, 
for some absolute positive real constants $c_\u$ and $c_\nu$ and for all $z\ge0$. 

Apparently the best currently known upper bound on $c_\u$ (in the iid case) is due to Shevtsova \cite{shev11} and is given by the inequality 
\begin{equation}\label{eq:shev}
	c_\u\le0.4748. 
\end{equation}
%
On the other hand, Esseen's example \cite{esseen56} with iid $X_i$'s, $n\to\infty$, $z$ appropriately close to $0$, and 
\begin{equation}\label{eq:esseen56}
	\P(X_1=1-p_{\mathsf{Ess}})=p_{\mathsf{Ess}}=1-\P(X_1=-p_{\mathsf{Ess}})
\end{equation}
with $p_{\mathsf{Ess}}:=2-\sqrt{10}/2=0.4188...$ 
showed that $c_\u$ cannot be less than $\frac{3+\sqrt{10}}{6 \sqrt{2 \pi }}=0.4097\ldots$; a similar lower bound on the BE constant for intervals was recently shown by Dinev and Mattner \cite{din-matt} to be $\sqrt{\frac2\pi}=0.7978\ldots$, which is almost twice as large as $0.4097\ldots$. 
\label{c_u gap}. 

Thus, the optimal value of $c_\u$ is already known to be within the rather small interval from $0.4097$ to $0.4748$ in the iid case \big(in the general case the best known upper bound on $c_\u$ appears to be $0.5600$, due to 
Shevtsova \cite{shevtsova-DAN};
a slightly worse upper bound, $0.5606$, is due to Tyurin \cite{tyurin}\big).

\section{The Bohman--Prawitz--Vaaler smoothing inequalities}\label{praw}
To a significant extent the mentioned best known uniform BE bounds are based 
on the smoothing result due to Prawitz \cite[(1a, 1b)]{prawitz72_limits}, which states the following. There exists a nonempty class of functions $M\colon\R\to\CC$ such that 
\begin{equation}\label{eq:M=0}
	M(t)=0\quad\text{if}\quad|t|>1 
\end{equation}
and 
for any r.v.\ $X$, any real $T>0$, and any real $x$, 
\begin{gather}
	\GG\big(M_T(-\cdot)\E e^{iX\cdot}\big)(x)
	\le\P(X<x)-\tfrac12\le\P(X\le x)-\tfrac12
	\le\GG\big(M_T(\cdot)\E e^{iX\cdot}\big)(x), 
	\label{eq:praw}\\ 
\intertext{where}\quad 
	M_T(\cdot):=M(\cdot/T), \label{eq:M_T} \\ 
\GG(f)(x):=\frac i{2\pi}\,\pv\int_\infty^\infty e^{-itx}f(t)\frac{\dd t}t,  \label{eq:GG}
\end{gather}
and $\pv$ stands for ``principal value'', so that $\pv\int_{-\infty}^\infty:=\lim_{\vp\downarrow0\atop A\uparrow\infty}\big(\int_{-A}^{-\vp}+\int_\vp^A\big)$; here and subsequently, the symbol $\cdot$ stands for the argument of a function. 
Of course, the upper and lower bounds in \eqref{eq:praw} must take on only real values; this can be provided by the condition that 
\begin{equation}\label{eq:M1,M2}
	M_1:=\Re M\ \text{is even}\quad\text{and}\quad M_2:=\Im M\ \text{is odd.}
\end{equation}  
Note also that the upper and lower bounds in \eqref{eq:praw} easily follow from each other, by changing $X$ to $-X$. 

Inequalities \eqref{eq:praw} may be compared with the corresponding well-known inversion formula 
\begin{equation}\label{eq:ident0}
	\P(X<x)+\tfrac12\P(X=x)-\tfrac12
	=\GG(\E e^{i X\cdot})(x)
\end{equation}
for all real $x$; see e.g.\ \cite[(2)]{gurland48}.  
The multiplier $M(\cdot)$ of the c.f.\  $\E e^{iX\cdot}$ in \eqref{eq:praw} is 
the Fourier transform of the function  
$
	\check M(\cdot):=\frac1{2\pi}\,\int_{-\infty}^\infty e^{-it\cdot}M(t)\dd t,  
$ 
which may be considered as a smoothing kernel -- since, in view of \eqref{eq:M=0}, 
the spectral decomposition of $\check M$ does not have 
components of frequencies greater than $1$. 
So, the factors $M(\pm\cdot/T)$ in the bounds in \eqref{eq:praw} \emph{filter} out the components of the function $\GG(\E e^{iX\cdot})$ of frequencies greater than $T$ and thus make the function smoother and flatter, 
especially if $T$ is not large enough. 
Another way to look at such smoothing is through the Paley--Wiener theory, which implies that the Fourier spectrum of a function is contained in the interval $[-T,T]$ iff the function is (the restriction to $\R$ of) an entire analytic function of exponential type $T$ and hence rather slowly varying if $T$ is not large; see e.g.\ \cite[Section~43]{donoghue}. 
On the other hand, from an analytical viewpoint, the presence of the factors $M(\pm\cdot/T)$ is useful, because one then needs to bound the values $\E e^{itX}$ of the c.f.\  of $X$ only for $t\in[-T,T]$, which is a much easier task unless $T$ is too large.  
 
One particular function $M$ for which \eqref{eq:praw} holds is given by the formula 
\begin{equation}\label{eq:M special}
	M(t)=\big[(1-|t|)\,\pi t\cot\pi t+|t|
	-i(1-|t|)\,\pi t\big]\,\ii\{|t|<1\} 
\end{equation}
for all $t\ne0$ \cite{prawitz72_limits}; here and subsequently, it is tacitly assumed that the functions of interest are extended to $0$ by continuity.  
For this particular multiplier $M$, 
which was shown in \cite{prawitz72_limits} to have a certain optimality property, 
the corresponding smoothing kernel $\check M(\cdot):=\frac1{2\pi}\int_\R e^{-it\cdot}M(t)\dd t$ is given by the formula 
\begin{equation*}
	\check M(x)=\frac{
   2 \pi  x \sin x \left(2 \pi  (x+2 \pi )-x^2 \psi'\left(\frac{x}{2 \pi }\right)\right)
   -(1-\cos x) \left(x^3 \psi''\left(\frac{x}{2 \pi }\right)+4 \pi ^2 (x+4
   \pi )\right)}{4 \pi ^3 x^3} 
\end{equation*}

\begin{minipage}
{0.2\linewidth}
\centering
\vspace{-6pt}
\includegraphics[width=1\textwidth]{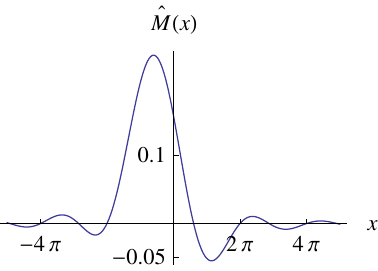}
\end{minipage}
\vspace{4pt}
\hspace{0.1cm}
\begin{minipage}
{0.73\linewidth}
for 
$x\notin\{-2n\pi\colon n\in\{0\}\cup\N\}$, 
where $\psi$ is the digamma function, defined by the formula $\psi(z)=\Ga'(z)/\Ga(z)$;   
this kernel is (necessarily
) asymmetric and alternating in sign; also, $\int_{-\infty}^\infty\check M(x)\dd x=M(0)=1$; a part of the graph of this kernel $\check M$ is shown here on the left. 
\end{minipage}

Earlier, inequalities of the form \eqref{eq:praw} were obtained by Bohman~\cite{bohman} for another class of functions $M$, with apparently not quite as good approximation properties. 
Another approach to Prawitz's results was demonstrated by Vaaler~\cite{vaaler85}. 


\section{Nonuniform BE bounds: Nagaev's result and method}\label{nonunif} 
The classical result by Nagaev \cite{nagaev65} is that in the ``iid'' case 
\begin{equation}\label{eq:BE nonunif}
	|\P(S>z\sqrt n)-\P(Z>z)|\le c_\nu\frac{\E|X_1|^3}{(1+z^3)\sqrt n}
\end{equation}
for all real $z\ge0$, where $c_\nu$ is an absolute constant. 
Bikelis \cite{bik66} extended this result to the case of non-iid $X_i$'s. 
Nagaev's method involves the following essential components:
\begin{itemize}\label{nag-method}
	\item truncation;
	\item Cramer's exponential tilt, together with a uniform BE bound;  
	\item an exponential bound on large deviation probabilities.
\end{itemize}
First, truncated versions of $X_i$, say $X_i^{(y)}$, are obtained, such that $X_i^{(y)}\le y$ for some real $y>0$ and all $i$ \big(the r.v.'s $X_i^{(y)}$ may, in some variants of this approach including \cite{nagaev65}, be improper in the sense that they may take values that are not real numbers\big). The truncation is done in order to make the exponential tilt and an exponential inequality possible. 
The value of the truncation level $y$ is chosen (i) to be large enough so that the tails of the truncated sum $S^{(y)}:=X_1^{(y)}+\dots+X_n^{(y)}$ be close enough to those of $S$ and, on the other hand, (ii) to be small enough so that the exponential tilt and the exponential inequality result in not too large a bound. In some variants, including the ones in \cite{nagaev65,bik66}, two different truncation levels are used. 

In view of the uniform BE bound
, without loss of generality $z\ge z_0$, where $z_0$ is an arbitrarily chosen positive real number. Two main cases are then considered:  
\begin{enumerate}[{Case} 1: ]
	\item $z_0\le z<c\sqrt{\ln(\sqrt n/\E|X_1|^3)}$ (``moderate deviations''); 
	\item $z\ge z_0\vee c\sqrt{\ln(\sqrt n/\E|X_1|^3)}$ (``large deviations''); 
\end{enumerate}
here $c$ is a positive constant. 


In Case~1, of moderate deviations, the exponential tilting is performed, which may be presented as follows. Take some real $h>0$ and let 
$\tX_1=\tX_1^{(h,y)},\dots,\tX_n=\tX_n^{(h,y)}$ be any r.v.'s such that 
\begin{equation}\label{eq:tilt}
	\E g(\tX_1,\dots,\tX_n)=\frac{\E e^{hS^{(y)}}g(X_1^{(y)},\dots,X_n^{(y)})}{\E e^{hS^{(y)}}}
\end{equation}
for all bounded (or for all nonnegative) Borel-measurable functions $g\colon\R^n\to\R$.  
Equivalently, one may require condition \eqref{eq:tilt} only for Borel-measurable indicator functions $g$; clearly, such r.v.'s $\tX_i$ do exist. It is also clear that the r.v.'s $\tX_i$ are independent. 
These r.v.'s, the $\tX_i$'s, may be referred to as the tilted or, more specifically, $h$-tilted versions of the $X_i^{(y)}$'s. 
Clearly, without the truncation, the tilted versions of the original r.v.'s $X_i$ may not exist, since $\E e^{hS}$ may be infinite even if $\E|X_i|^3<\infty$ for all $i$.   
Using \eqref{eq:tilt} with $g(x_1,\dots,x_n)=e^{-h(x_1+\dots+x_n)}\ii\{x_1+\dots+x_n>x\}$, it is easy to see that 
\begin{equation}\label{eq:tilt back}
	\P(S^{(y)}>x)=\E e^{hS^{(y)}}\,\int_x^\infty \dd u\; h e^{-hu}\P(x<\tS\le u)
\end{equation}
for all real $x$, where 
$
	\tS:=\tX_1+\dots+\tX_n. 
$ 
Similarly, one can write 
\begin{equation}\label{eq:tilt normal}
	\P(BZ>x)=\E e^{hBZ}\,\int_x^\infty \dd u\; h e^{-hu} \P(x<BZ+B^2h\le u)
\end{equation}
for all real $x$, since any $h$-tilted version of the r.v.\ $BZ$ has the distribution $N(B^2h,B^2)$. 
At that, good choices for $y$ and $h$ are of the form $\al x$ and $\eta x/B^2$, for some real parameters $\al$ and $\eta$ in $(0,1)$. 

So, to bound $|\P(S^{(y)}>z\sqrt n)-\P(Z>z)|$ (cf.\ \eqref{eq:BE nonunif}), one can demonstrate sufficient closeness of the terms $\E e^{hS}$ and $\P(x<\tS\le u)$ in \eqref{eq:tilt back} to the corresponding terms $\E e^{hBZ}$ and $\P(x<BZ+B^2h\le u)$ in \eqref{eq:tilt normal}. 
For each $i$, one notices that 
\begin{equation}\label{eq:tilted 3rd}
	\E|X_i^{(y)}|^3e^{hX_i^{(y)}}\le e^{hy}\E|X_i|^3  
\end{equation}
and then shows that $\E e^{hX_i^{(y)}}$ is close 
enough to $1$ and, somewhat more precisely, to $\E e^{hZ\sqrt{\E X_i^2}}$, and that 	
the mean and variance of $\tX_i$ are close enough to $h\E X_i^2$ and
$\E X_i^2$, respectively. So, one shows that $\E e^{hS^{(y)}}$ is close to $\E e^{hBZ}$, and the first two moments of $\tS^{(y)}$ are close enough to those of $BZ+B^2h$. 
Using now a uniform BE bound 
as in \eqref{eq:BE bounds} -- but for the $\tX_i$'s rather than the $X_i$'s, one shows that $\P(x<\tS\le u)$ is close enough to $\P(x<BZ+B^2h\le u)$.    

In Case~2, of large deviations, instead of the exponential tilting and a uniform BE bound, one employs an exponential inequality to bound $\P(S^{(y)}>x)$ and hence $\P(S^{(y)}>x)-\P(BZ>x)$ from above; for the lower bound on the latter difference, one simply uses $-\P(BZ>x)$. 

\subsection{A historical sketch of the problem of nonuniform BE bounds 
} \label{errors}
The constant factors $c_\nu$ 
in the mentioned papers \cite{nagaev65,bik66} were not explicit. 
All papers known to this author with explicit values of $c_\nu$ followed the scheme of proof given by Nagaev \cite{nagaev65}, as delineated above. 

Apparently the first such explicit value of $c_\nu$ was greater than $1955$, as reported by Paditz \cite{paditz78}. 
In his dissertation
\cite{paditz_diss}, a much better value, $114.7$, was presented. 
Later, Paditz \cite{paditz89} showed that $c_\nu<31.935$. 

Michel \cite{michel81} showed that in the iid case $c_\nu\le c_u+8(1+e)$, which would be less than $30.2211$, assuming the mentioned value $0.4748$ for $c_\u$, obtained in the later paper by Shevtsova \cite{shev11}. 

Again in the iid case, Nefedova and Shevtsova \cite{nefed-shev11} briefly stated that they had gone along the lines of the proof in \cite{paditz89} except using a better value for $c_\u$ (namely, $0.4784$, obtained in \cite{kor-shev10-actuar}) in place of such a value (namely, $0.7915$ \cite{shiganov}) used in \cite{paditz89}, to get $25.80$ for $c_\nu$. 

Once again in the iid case, Nefedova and Shevtsova claimed in \cite{nefed-shev} that $c_\nu<18.2$. 
However, there appears to be an error there. 
Namely, the first inequality in \cite[(14)]{nefed-shev} is equivalent to the reverse of the last inequality on page 75 there, which latter is in turn equivalent to the condition $x^2\ge c_n(x;\de,a,b,c)$ in \cite[Theorem 1]{nefed-shev}, which is also equivalent to the second display on \cite[page~75]{nefed-shev}; the expression $c_n(x;\de,a,b,c)$ is defined in the first display on page~70 of \cite{nefed-shev}. 
So, for any given $X,n,b,c,\de$ satisfying all the conditions of \cite[Theorem 1]{nefed-shev}, the first inequality in \cite[(14)]{nefed-shev} and the last inequality on \cite[page~75]{nefed-shev} can both hold only for one value of $a$. This wrong inequality in \cite[(14)]{nefed-shev} is also used for \cite[(16)]{nefed-shev}. 

Finally, working along lines quite similar to those in \cite{nefed-shev}, Grigor'eva and Popov \cite{grig-popov2012Dokl,grig-popov2012SSI} claimed that $c_\nu<22.2417$ in the general, non-iid case. However, there appears to be the same kind of errors there: compare \cite[(9) and (11)]{grig-popov2012SSI} with \cite[(14) and (16)]{nefed-shev}, respectively.

This leaves, for now, 
$31.935$ as the best (possibly correctly) established nonuniform BE constant factor $c_\nu$ -- in the general, non-iid case. 
On the other hand, it follows from a result by Chistyakov \cite[Corollary~1]{chist90} that $c_\nu$ is necessarily no less than $1$, and this lower bound on $c_\nu$ is asymptotically exact in a certain sense for $z\to\infty$. Apparently, this has been the best known lower bound on $c_\nu$. However, it is easy to improve this bound slightly and show that necessarily 
\begin{equation}\label{eq:c_nu>}
	c_\nu>1.0135; 
\end{equation}
this can be done by letting $X_1$ have the centered Bernoulli distribution with parameter $p=8/100$ and then letting $n=1$ and $z\uparrow1-p$. 
However, it was shown by Bentkus \cite{bent94BE}, the best constant factor for $n=1$ will be $1$ if $1+z^3$ in \eqref{eq:BE nonunif} is replaced by $z^3$; it is also conjectured in \cite{bent94BE} that the same constant factor, $1$, will be good for all $n$. 

Thus, in the non-iid case the apparently best known lower bound on $c_\nu$ is over 
$31$ times smaller than the best established upper bound on $c_\nu$, and this gap factor is 
over $25$ 
in the iid case. 

\subsection{Possible improvements of Nagaev's method}\label{nag-improve}
A crucial component of the mentioned method offered by Nagaev~\cite{nagaev65} and used in the subsequent papers \cite{bik66,paditz78,paditz_diss,michel81,tysiak,mirakh84,padit-mirakh86,paditz89,gavr,nefed-shev11,nefed-shev} 
is an exponential inequality. 
However, the exponential bounds used in all of those papers are not the best possible ones. An optimal exponential bound, in terms of the first two moments and truncated absolute third moments of the $X_i$'s was given by Pinelis and Utev \cite{pin-utev-exp}. In fact, the paper \cite{pin-utev-exp} provided a general method to obtain optimal exponential bounds, along with a number of specific applications of the general method. 

However, even the best possible \emph{exponential} bounds, say for sums of independent r.v.'s, can be significantly improved. 
The reason for this is that the class of exponential moments functions is very small (even though analytically very simple to deal with). Using a much richer class of moments functions, Pinelis \cite{pin-hoeff-AIHP} obtained the following result. %
Let $X_1,\dots,X_n$ be independent random variables (r.v.'s), with the sum $S:=X_1+\dots+X_n$. 
For any $a>0$ and $\th>0$, let  
$\Ga_{a^2}$ and $\Pi_{\th}$ stand for any independent r.v.'s such that  
$\Ga_{a^2}$ has the normal distribution with parameters $0$ and $a^2$, and $\Pi_{\th}$
has the Poisson distribution with parameter $\th$. 
Let also 
$
	\tPi_{\th}:=\Pi_{\th}-\E\Pi_{\th}=\Pi_{\th}-\th.
$ 
Let $\si$, $y$, and $\be$ be any 
positive real numbers  such that 
$
	\vp:=\frac{\be}{\si^2y}\in(0,1).     
$ 
Suppose that  
$
	\sum_i\E X_i^2\le\si^2,\ \sum_i\E(X_i)_+^3\le\be,\ \E X_i\le0,\ \text{and $X_i\le y$, 
	}
$ 
for all $i$.  
Let 
$
	\eta_{\vp,\si,y}:=\Ga_{(1-\vp)\si^2}+y\tPi_{\vp\si^2/y^2}. 
$ 
Then it is proved in \cite{pin-hoeff-AIHP} that 
\begin{equation}\label{eq:PUfF3}
	\E f(S)\le\E f(\eta_{\vp,\si,y})
\end{equation}
for all twice continuously differentiable functions $f$ such that $f$ and $f''$ are nondecreasing and convex. 
A corollary of this result is that for all $x\in\R$
\begin{align}
	\PP(S\ge x)\le
	\inf_{t\in(-\infty,x)}\,\frac{\E(\eta_{\vp,\si,y}-t)_+^3}{(x-t)^3} 
&\le c_{3,0}\,\PP^\lc(\eta_{\vp,\si,y}\ge x),\label{eq:main LC}
\end{align}
where 
$c_{3,0}:=\frac{2e^3}9\approx4.46$ 
and 
the function $\R\ni x\mapsto\PP^\lc(\eta\ge x)$ is defined as the least log-concave majorant over $\R$ of the tail function $\R\ni x\mapsto\PP(\eta\ge x)$ of a r.v.\ $\eta$. 
The bounds in \eqref{eq:PUfF3} and \eqref{eq:main LC} are much better than even the best exponential bounds (expressed in the same terms). 

A trade-off here is that the 
bounds 
given 
in \eqref{eq:main LC} are significantly more difficult to deal with, especially analytically, than exponential bounds. However, 
this can be done, as shown in the following 
discussion. 
In accordance with what was pointed out above, one needs an exponential bound (or a better one) only in Case~2, of large deviations, when $z\ge z_0\vee c\sqrt{\ln(\sqrt n/\E|X_1|^3)}$, which implies 
\begin{equation}\label{eq:case2}
	\E|X_1|^3/\sqrt n\ge e^{-z^2/c^2}. 
\end{equation}
Also, 
by \eqref{eq:main LC}, for any real $\tau<1$
\begin{equation*}
	(1-\tau)^3\P(S^{(y)}>x)\le\sum_{j=0}^\infty Q_j\frac{\la^j}{j!}e^{-\la},
\end{equation*}
where 
\begin{gather}
	x:=Bz=z\sqrt n,\quad y=\al x,\quad \al\in(0,1), \quad Q_j:=\Big(\frac{\al_1}z\Big)^3\E(Z+u_j)_+^3,
	\notag\\
	\al_1:=\sqrt{1-az_0^2/\al},\quad a:=\frac{\E(y\wedge (X_1)_+)^3}{z^3\sqrt n}, \quad u_j:=\al\Big(j-\frac\tau\al-\la\Big)z, \quad 
	\la:=\frac a{\al^3}. \label{eq:la} 
\end{gather}
Assume now that 
$
	\tau c^2\ge2, 
$ 
where $c$ is as in 
\eqref{eq:case2}. 
Since $z\ge z_0$, one has 
\begin{equation}\label{eq:Q0 bound}
	Q_0\le C_0\frac{\E|X_1|^3}{z^3\sqrt n}, 
\end{equation}
where 
\begin{equation}\label{eq:C0}
	C_0:=e^{z_0^2/c^2}\E(Z-\tau z_0)_+^3; 
\end{equation}
here one uses the fact that $e^{\be t^2}\E(Z-t)_+^3$ is decreasing in $t\ge0$ provided that $\be\le1/2$; in fact, this decrease is fast, especially when $\be<1/2$. 
Note also that $\E(Z-t)_+^3=\left(t^2+2\right) \varphi (t)-t \left(t^2+3\right) \bar{\Phi }(t)$ for all real $t$, where $\varphi$ and $\bar{\Phi }$ are the density and tail functions of $Z$. 

Next, since $\E g(\be Z)$ is nondecreasing in $\be\ge0$ for any convex function $g$, 
\begin{equation*}
	Q_j\le(\al_1/z_0)^3\E(Z+u_{j0})_+^3, \quad\text{where}\quad u_{j0}:=\al(j-\tau/\al-\la)z_0. 
\end{equation*}
Using now the identity $\E(Z+t)_+^3=t^3+3t+\E(Z-t)_+^3$ for all real $t$ and the decrease of $\E(Z-t)_+^3$ in $t\in\R$, one has 
\begin{equation}\label{eq:Q>0 bound}
	\sum_1^\infty Q_j\frac{\la^j}{j!}e^{-\la}
	\le C_1(a)\frac{\E|X_1|^3}{z^3\sqrt n}, 
\end{equation}
where 
\begin{equation*}
	C_1(a):=\Big(\frac{\al_1}{\al z_0}\Big)^3\Big[\sum_1^\infty(u_{j0}^3+3u_{j0})\frac{\la^{j-1}}{j!}
	+\E(Z-u_{10})_+^3+\E(Z-u_{20})_+^3\frac{\la^{2-1}}{2!}e^\la\Big]e^{-\la}, 
\end{equation*}
with $\la$ as defined in \eqref{eq:la}. 
The sum $\sum_1^\infty$ in the above expression of $C_1(a)$ is easy to evaluate explicitly. 
Also, since the left-hand side of \eqref{eq:BE nonunif} can never exceed $1$, without loss of generality 
\begin{equation}\label{eq:amax}
	a\le a_{\max},
\end{equation}
with $a_{\max}=1/c_\nu$; 
working a bit harder, one may assume \eqref{eq:amax} with $a_{\max}$ significantly smaller than $1/c_\nu$. 
Next, it appears that for values of the parameters $\al$ and $\tau$ that have a chance to be optimal or quasi-optimal, the factor $C_1(a)$ will be decreasing in $a\in[0,a_{\max}]$. 
Therefore and in view of \eqref{eq:Q0 bound} and \eqref{eq:Q>0 bound}, one will have 
\begin{equation}\label{eq:LD}
	\P(S^{(y)}>x)\le\frac{C_0+C_1(0+)}{(1-\tau)^3}\,\frac{\E|X_1|^3}{z^3\sqrt n}, 
\end{equation}
with $C_0$ as in \eqref{eq:C0} and $C_1(0+)=(\al z_0)^{-3}\E(Z+(\al-\tau)z_0)_+^3$. 
One can improve the above estimates by partitioning the interval $[0,a_{\max}]$ into a number of smaller subintervals and then considering the corresponding cases depending on which of the subintervals the value of $a$ is in. 

Thus, it is shown that $\P(S^{(y)}>x)$ can be appropriately bounded using the better-than-exponential bound in \eqref{eq:main LC}, and at that in a rather natural manner and incurring almost no losses. 
It should be clear that the expression on the right-hand side of inequality \eqref{eq:LD} will become a term in a bigger expression that is an upper bound on the left-hand side of \eqref{eq:BE nonunif}. That latter, bigger expression will then have to be (quasi-)minimized with respect to $z_0$, $\al$, $\eta$, $\tau$, and the other parameters, subject to the necessary restrictions on their values. 

Also, one can use ideas from \cite{a.nagaev69-I,a.nagaev69-II,pin81,pin85} to improve the estimation of the effect of truncation, as compared with the way that was done in the mentioned papers \cite{nagaev65,bik66,paditz78,paditz_diss,michel81,tysiak,mirakh84,padit-mirakh86,paditz89,gavr,nefed-shev11,nefed-shev}, as well as more ``synthetic'' ways to bound moments of the tilted distribution -- cf.\ results in \cite{winzor,pin11tilt,tilt-symm,re-center}.  
In addition, as in \cite{nefed-shev}, one can use the uniform bound $0.3328(\E|X_1|^3+ 0.429)/\sqrt n$ from \cite{shev11}, which is smaller than the previously mentioned bound of the classical form $c_\u L=c_u\E|X_1|^3/\sqrt n$ with $c_\u= 0.4748$. 
There are a few other potentially useful modifications. 
Thus, the improvements concern every one of the three major ingredients of Nagaev's method listed beginning on page~\pageref{nag-method}. 
By utilizing the above ideas, one may hope to 
improve the upper bound on $c_\nu$ to about 10 in the iid case and to about 12 in the general case. 
When and if such an objective is attained, the gap between the available upper and lower bounds on $c_\nu$ will be decreased, at least, about $3$ times in the iid case and about $10$ times in the general case. 

However, significant further progress after that seems unlikely within the framework of the method of \cite{nagaev65}. 
One of the main obstacles here is the factor $e^{hy}$ as in \eqref{eq:tilted 3rd}. Since good choices for $y$ and $h$ turn out to be $\al x$ and $\eta x/B^2$ with $\al$ and $\eta$ somewhat close to $0.5$, this factor will then be something like $e^{z^2/4}$, which is large for large enough $z$. 

Yet, the factor $e^{hy}$ is the best possible one in \eqref{eq:tilted 3rd} (even assuming that $X^{(y)}_i=X_i$ and hence $X^{(y)}_i$ is zero-mean). Such a large factor is necessary when $X_i$ has a two-point distribution highly skewed to the right. 
On the other hand, certain considerations suggest that the least favorable situation in Case~1 of moderate deviations is when $n$ is very large but $z$ is not so, and then the mentioned 
least favorable distribution (for the uniform BE bound) given by \eqref{eq:esseen56} is only slightly skewed. 
This creates a significant tension in using the exponential tilt. 

One may try to reduce the factor $e^{hy}$ by decreasing $\al$ and hence $y$ -- but this will increase fast the effect of truncation, which is (at least roughly) proportional to $1/\al^3$. 

Even if one were able to get rid of the factor $e^{hy}$ altogether, the corresponding uniform BE bound on the rate of convergence to the probability $\P(x<BZ+B^2h\le u)$ in \eqref{eq:tilt normal} would still seem relatively too large, since this probability itself is less than $\P(x<BZ+B^2h)=\P\big(Z>(1-\eta)z\big)$ and therefore is rather small for 
what appears to be the least favorable 
values of $z$
, such as $2.5$ to $3.5$ (and values of $\eta$ typically not too far from $0.5$). In contrast, the mentioned asymptotic lower bound by Esseen~\cite{esseen56} (recall \eqref{eq:esseen56}) is attained for $z$ close to $0$; furthermore, the corresponding asymptotic expression is rather highly peaked near the maximum and is thus much smaller outside of a neighborhood of the maximum point.   

Yet another apparently powerful cause of tension is as follows. After the $X_i$'s have been truncated, a natural bound on $|\P(S^{(y)}>x)-\P(BZ>x)|$, obtained via either the exponential tilt or a Stein-type method, decays in an exponential rather than power fashion; see e.g.\ the results \cite{chen-shao05} and \cite{BE-lin}, which imply an upper bound of the form $c(\la)\frac{\E|X_1|^3}{e^{\la z}\sqrt n}$ for real $\la>0$, say in the iid case. 
The factor $1/e^{\la z}$ decays much faster than $1/(1+z^3)$ when $z$ is large. However, the former factor may be much greater than the latter, especially if $\la$ is not large and $z$ is not very large. For instance, if $\la=1/2$ as in \cite{chen-shao05}, then 
$\max_{z>0}\frac1{e^{\la z}}\big/ \frac1{1+z^3}=10.8\dots$, attained at $z=5.9719\dots$.

In the next section, a new approach to obtaining nonuniform BE bounds
is described, based on the Fourier method, complemented by extremal problem methods. 

\section{A 
new way to obtain nonuniform BE bounds} \label{new way}

Take any function 
$h\in C^1$ such that (the limit) $\GG(h)$ exists (and is) in $\R$ and $h(t)/t\to0$ as $|t|\to\infty$; here and in what follows, $C^k$ denotes the class of all $k$ times continuously differentiable complex-valued functions defined on $\R$. 
Take any real $x\in\R$. 
Note that $\GG(1)(x)=\frac12\,\sign x$ -- say, by \eqref{eq:ident0} with $X=0$. So, writing  $\GG(h)=h(0)\GG(1)+\GG\big(h-h(0)\big)$ and evaluating the 
$\pv$-integral 
in the expression for $\GG\big(h-h(0)\big)$ by parts, one has 
\begin{equation}\label{eq:GG1} 
	x\GG(h)(x)=\tfrac12\,h(0)\,x\sign x + i\,\GG(\La h)(x)  
\end{equation}
if $x\ne0$, 
where the linear operator $\La$ is defined by the formula 
\begin{equation}\label{eq:1st round}
	(\La h)(t):=-t\,\frac{\dd}{\dd t}\,\frac{h(t)-h(0)}t=-\frac{h(0)-[h(t)+h'(t)(-t)]}t
\end{equation}
for $t\ne0$. 
In fact, identity \eqref{eq:GG1} holds for $x=0$ as well, in view of 
the definitions of $\GG$ and $\La$. 
By induction, for all $k\in\N:=\{1,2,\dots\}$ 
\begin{equation}\label{eq:La_k}
	(\La^k h)(t)=-k!\,t^{-k}\,\Big(h(0)-\sum_{j=0}^k h^{(j)}(t)\,\frac{(-t)^j}{j!}\Big) 
	=(-1)^k\int_0^1 [h^{(k)}(t)-h^{(k)}(\al t)]\,k\al^{k-1}\dd\al 
\end{equation}
if $h\in C^k$ and $t\ne0$, and hence $(\La^k h)(0)=0$. 
So, iterating \eqref{eq:GG1}, one has 
\begin{equation}\label{eq:GGk} 
	\GG(h)(x)=\frac{h(0)\sign x}2  + \Big(\frac ix\Big)^k\,\GG(\La^k h)(x)  
\end{equation}
for all real $x\ne0$, all $k\in\N$, and functions $h$ such that 
\begin{equation}\label{eq:g}
	\text{$h\in C^k$,\quad $\GG(h)$ exists in $\R$,\quad and\quad $h^{(j)}(t)/t\to0$ for all $j=1,\dots k$ as $|t|\to\infty$.}
\end{equation} 
More generally, 
\begin{equation}\label{eq:GGk gen} 
	\GG(h)(x)=\frac{h(0)\sign x}2 
	+\frac i{2\pi}\sum_{j=1}^k\frac{h^{(j)}(0+)-h^{(j)}(0-)}{j(ix)^{j}}
	+ \Big(\frac ix\Big)^k\,\GG(\La^k h)(x)  
\end{equation}
for all real $x\ne0$, all $k\in\N$, and all functions $h$ such that 
\begin{multline}\label{eq:g gen}
	\text{$h\in C(\R)$, \quad $h\in C^k(\R\setminus\{0\})$,\quad $\GG(h)$ exists in $\R$,\quad and} \\ 
	\text{for each $j\in\{1,\dots k\}$\   
	there exists $h^{(j)}(0\pm)\in\R$\    
	 and\ $h^{(j)}(t)/t\to0$ as $|t|\to\infty$.}
\end{multline} 
The condition $h\in C^k(\R\setminus\{0\})$ in \eqref{eq:g gen} can be slightly relaxed, to the following: 
\begin{equation}\label{eq:g-var}
\begin{gathered}
	\text{$h\in C^{k-1}(\R\setminus\{0\})$\quad and\quad $h^{(k-1)}$ is of locally bounded variation on $\R\setminus\{0\}$,} 
	\end{gathered}
\end{equation}
with $\GG(\La^k h)(x)$ then understood as $\GG(\widetilde{\La^k}h)(x)
+(-1)^k\tilde\GG(h^{(k-1)})(x)$, where 
$(\widetilde{\La^k}h)(t):=\break 
-k!\,t^{-k}\,\big(h(0)-\sum_{j=0}^{k-1} h^{(j)}(t)\,\frac{(-t)^j}{j!}\big)$ for $t\ne0$ 
and 
$\tilde\GG(h^{(k-1)})(x):=
\frac i{2\pi}\pv\int_{-\infty}^\infty e^{-itx}\frac{\dd h^{(k-1)}(t)}t$. 

Identity \eqref{eq:GGk} immediately implies  
\begin{theorem}\label{th:ineq k}
Take any $k\in\N$ and any real $T>0$ and $x\ge0$. 
Let $X$ be any r.v.\ with $\E|X|^k<\infty$. Let $f$ denote the c.f.\ of $X$. 
Let $M$ be as in \eqref{eq:praw}, with the additional requirement that $M\in C^k$. 
Then 
\begin{equation}\label{eq:ineq_k}
	-i^k\GG\big(\La^k\, r_{T,-}\big)(x)\le x^k\P(X>x)
	\le x^k\P(X\ge x)\le-i^k\GG\big(\La^k\, r_{T,+}\big)(x), 
\end{equation}
where 
\begin{equation}\label{eq:r}
	r_{T,\pm}(\cdot):=M_T(\mp\cdot)f(\cdot). 
\end{equation}
\end{theorem}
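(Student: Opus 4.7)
The plan is to combine the Bohman--Prawitz--Vaaler bounds \eqref{eq:praw} with the inversion-type identity \eqref{eq:GGk} that has just been established. First, I would rewrite \eqref{eq:praw} in terms of the functions $r_{T,\pm}$ defined in \eqref{eq:r}: note that $r_{T,-}(\cdot)=M_T(\cdot)f(\cdot)$ is the multiplier appearing in the upper Prawitz bound, while $r_{T,+}(\cdot)=M_T(-\cdot)f(\cdot)$ appears in the lower one. Passing to complements in \eqref{eq:praw} immediately gives
\begin{align*}
\P(X>x)&=1-\P(X\le x)\ge\tfrac12-\GG(r_{T,-})(x),\\
\P(X\ge x)&=1-\P(X<x)\le\tfrac12-\GG(r_{T,+})(x).
\end{align*}

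Second, I would multiply both of these by $x^k\ge0$ and invoke \eqref{eq:GGk} with $h=r_{T,\pm}$. Since $r_{T,\pm}(0)=M(0)f(0)=1$ (the normalization $M(0)=1$ being implicit in Prawitz's setup -- taking $T\to\infty$ in \eqref{eq:praw} for a continuous $X$ forces it) and $\sign x=1$ for $x>0$, \eqref{eq:GGk} yields
\[
x^k\GG(r_{T,\pm})(x)=\tfrac12\,x^k+i^k\,\GG(\La^k r_{T,\pm})(x).
\]
Plugging this into the two displays above makes the $\tfrac12 x^k$ terms cancel exactly, producing
\[
x^k\P(X>x)\ge-i^k\GG(\La^k r_{T,-})(x),\qquad x^k\P(X\ge x)\le-i^k\GG(\La^k r_{T,+})(x),
\]
which together with the trivial $\P(X>x)\le\P(X\ge x)$ is precisely the chain \eqref{eq:ineq_k}, at least for $x>0$. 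The boundary case $x=0$ reduces, via the extension of \eqref{eq:GG1} to the origin noted in the text, to the tautology $0\le 0\le 0\le 0$.

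Third, the only thing left is to verify that the regularity hypothesis \eqref{eq:g} holds with $h=r_{T,\pm}$, so that \eqref{eq:GGk} is legitimately applicable. This is immediate: the compact-support condition \eqref{eq:M=0} gives $r_{T,\pm}(t)\equiv0$ for $|t|>T$, so all derivatives of $r_{T,\pm}$ vanish outside $[-T,T]$, trivially satisfying the decay hypothesis $h^{(j)}(t)/t\to0$; membership $r_{T,\pm}\in C^k(\R)$ follows from $M\in C^k$ together with $\E|X|^k<\infty$ (which ensures $f\in C^k$); and existence of $\GG(r_{T,\pm})(x)$ in $\R$ is guaranteed by \eqref{eq:praw} itself. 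In short, once \eqref{eq:GGk} is in hand, Theorem~\ref{th:ineq k} is essentially a one-line consequence, and I do not anticipate any substantive obstacle -- the real work was already done in deriving \eqref{eq:GGk}, and the iterated operator $\La^k$ is exactly the device that converts the $O(1)$ Prawitz bound on $\P(X>x)-\tfrac12$ into an $O(x^{-k})$ bound suitable for the tail zone.
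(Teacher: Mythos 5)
Your proposal is correct and follows exactly the route the paper intends: the paper states that identity \eqref{eq:GGk} "immediately implies" Theorem~\ref{th:ineq k}, and your argument (pass to complements in \eqref{eq:praw}, multiply by $x^k$, apply \eqref{eq:GGk} with $h=r_{T,\pm}$ so the $\tfrac12 x^k$ terms cancel, and check \eqref{eq:g} via $M\in C^k$, $\E|X|^k<\infty$, and the compact support of $M_T$) is precisely that implication spelled out, matching also the paper's remark that $\E|X|^k<\infty$ yields $f\in C^k$ and hence \eqref{eq:g} for $r_{T,\pm}$.
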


\begin{remark*} Condition $\E|X|^k<\infty$ in Theorem~\ref{th:ineq k} implies $f\in C^k$, so that \eqref{eq:g} holds with $g=r_{T,\pm}$. 
\end{remark*}

As was mentioned, 
the Prawitz smoothing filter $M$ given by \eqref{eq:M special} provides the tightest, in a certain sense, upper and lower bounds in \eqref{eq:praw} on the d.f.\ of $X$. 
However, it is not smooth enough to be used in Theorem~\ref{th:ineq k} in the most interesting in applications case $k=3$. 
Namely, that $M$ is not even in $C^1$ -- whereas one needs $M\in C^3$ in Theorem~\ref{th:ineq k} for $k=3$.

There are a number of ways to develop such a smooth enough smoothing filter. 
Some of them can be based on Proposition~\ref{prop:bohman} in Section~\ref{smoothing} of this paper; see e.g.\ the function $M=M_{0,2}$ given by formula \eqref{eq:Mmy special}. 


The identity \eqref{eq:ident0} can be rewritten in the following more general and hence sometimes more convenient form. 

\begin{proposition}\label{prop:b-var}
Let $L$ be any complex-valued function of bounded variation on $\R$, and let $\ell$ be its Fourier--Stieltjes transform, so that 
$
	\ell(t)=\int_{-\infty}^\infty e^{i t x}\dd L(x) 
$ 
for all real $t$. 
Assume also that $L$ is regularized so that $2L(x)=L(x-)+L(x+)$ for all $x\in\R$ and extended to $[-\infty,\infty]$ so that $L(\pm\infty)=\lim_{x\to\pm\infty}L(x)$. 
Then 
\begin{equation}\label{eq:b-var}
	L(x)-\tfrac12\,[L(\infty)-L(-\infty)]
	=\GG(\ell)(x) \quad\text{for all real $x$.} 
\end{equation}
\end{proposition}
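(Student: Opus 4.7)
The plan is to reduce Proposition~\ref{prop:b-var} to the probabilistic inversion identity~\eqref{eq:ident0} via a decomposition argument. Both sides of \eqref{eq:b-var} are linear in $L$ (after tracking the affine contribution of $L(\pm\infty)$), so splitting $L = L_1 + iL_2$ into its real and imaginary parts reduces the problem to real-valued $L$. For real-valued $L$ of bounded variation, I would then invoke the Jordan decomposition to write $L = L(-\infty) + L^+ - L^-$ with $L^\pm$ nondecreasing and vanishing at $-\infty$; the constant $L(-\infty)$ contributes zero to $\dd L$ and hence to $\ell$, so after bookkeeping, matters reduce to the case of a nondecreasing bounded $L$ with $L(-\infty) = 0$.

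For such an $L$, set $\mu := L(\infty)$. If $\mu = 0$ then $L \equiv 0$ and the identity is trivial; otherwise $F := L/\mu$ is a regularized probability distribution function with characteristic function $f = \ell/\mu$. Applying \eqref{eq:ident0} to $F$ gives $F(x) - \tfrac12 = \GG(f)(x)$, and multiplying by $\mu$ yields \eqref{eq:b-var}, since $\mu = L(\infty) - L(-\infty)$ under the normalization $L(-\infty)=0$.

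A more direct route is to substitute the definition of $\ell$ into $\GG(\ell)(x)$ and swap the order of integration, then use the classical evaluation $\pv\int_{-\infty}^\infty e^{isu}/u\,\dd u = i\pi\,\sign(s)$:
\[\GG(\ell)(x) = \frac{i}{2\pi}\int_{-\infty}^\infty \Big(\pv\int_{-\infty}^\infty \frac{e^{it(y-x)}}{t}\,\dd t\Big)\dd L(y) = -\tfrac12\int_{-\infty}^\infty \sign(y-x)\,\dd L(y).\]
Splitting the $\dd L$-integral into $\{y > x\}$ versus $\{y < x\}$ (and invoking the regularization $2L(x) = L(x-) + L(x+)$ to handle any atom at $x$) expresses the right-hand side in terms of $L(x)$, $L(\infty)$, and $L(-\infty)$, yielding \eqref{eq:b-var}.

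The principal obstacle is justifying the Fubini-type exchange of the principal-value integral with the Stieltjes integral, since the inner $\pv$-integral is only conditionally convergent. A standard device is to insert a Gaussian damping factor $e^{-\varepsilon t^2}$ inside the $t$-integral, apply ordinary Fubini (the resulting integrand being absolutely integrable against $\dd L\otimes\dd t$), and then pass to the limit $\varepsilon \downarrow 0$ by dominated convergence, using the finite total variation of $L$. Equivalently, one can work directly with the truncated integrals $\int_\varepsilon^A + \int_{-A}^{-\varepsilon}$ and pass to limits; either way, once this analytic step is secured, the remainder is routine.
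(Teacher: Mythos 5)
Your first route (linearity in $L$ plus the decomposition of a complex regularized function of bounded variation into real/imaginary parts and then Jordan decomposition into nondecreasing pieces, normalized to distribution functions so that \eqref{eq:ident0} applies) is exactly the paper's proof, which it states in one sentence. The proposal is correct; the alternative Fubini route you sketch is a valid but unnecessary extra.
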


This follows immediately from \eqref{eq:ident0}, because (i) both sides of \eqref{eq:b-var} are linear in $L$ and (ii) any regularized function of bounded variation on $\R$ is a linear combination (with complex coefficients) of regularized distribution functions.  

Suppose that $\ell\colon\R\to\C$ is a function which may depend on a number of parameters. 
For brevity, let us say that the function $\ell$ is a \emph{quasi-c.f.} if it can be represented as a linear combination of $k$ c.f.'s with (possibly complex) coefficients such that the length $k$ of the combination and the coefficients are bounded uniformly over all possible values of the parameters. 

Clearly, the product of two quasi-c.f.'s is a quasi-c.f. 
Also, any linear combination of two quasi-c.f.'s is a quasi-c.f., provided that the coefficients of the  combination are bounded uniformly over all possible values of the parameters.  
Moreover, one has the following simple proposition. 

\begin{proposition}\label{prop:quasi-cf} 
Take any natural $m$. 
Let $N$ denote the c.f.\ of a r.v.\ $Y$ whose distribution may depend on a number of parameters. 
Suppose that $\E|Y|^m$ is (finite and) bounded uniformly over all possible values of the parameters. 
Then the $m$th derivative $N^{(m)}$ of $N$ is a quasi-c.f. 
\end{proposition}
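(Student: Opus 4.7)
The plan is to differentiate the characteristic function $N$ directly and then decompose the resulting expectation into pieces, each of which is a uniformly bounded scalar multiple of a characteristic function.

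First I would use the assumption $\E|Y|^m < \infty$ to justify differentiating $m$ times under the expectation sign, yielding
\begin{equation*}
N^{(m)}(t) = i^m \E\bigl[Y^m e^{itY}\bigr]
\end{equation*}
for all real $t$. This is a standard dominated-convergence argument using $|Y^m e^{itY}| = |Y|^m$ and the uniform integrability of $|Y|^m$.

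Next, I would split $Y^m$ into its positive and negative contributions. Writing $Y_+:=\max(Y,0)$ and $Y_-:=\max(-Y,0)$, one has the pointwise identity $Y^m = Y_+^m + (-1)^m Y_-^m$, so
\begin{equation*}
N^{(m)}(t) = i^m\,\E Y_+^m\cdot\phi_+(t) + (-i)^m\,\E Y_-^m\cdot\phi_-(t),
\end{equation*}
where (assuming the relevant moment is nonzero) $\phi_\pm$ are the characteristic functions of the probability measures with Radon--Nikodym derivatives $Y_\pm^m/\E Y_\pm^m$ with respect to the law of $Y$; if $\E Y_\pm^m=0$ the corresponding term is absent. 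Each $\phi_\pm$ is a genuine characteristic function by construction, so $N^{(m)}$ is a linear combination of at most two c.f.'s.

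Finally I would bound the coefficients uniformly: since $\E Y_+^m + \E Y_-^m \le \E|Y|^m$ and $\E|Y|^m$ is by hypothesis bounded uniformly over the parameter values, both complex coefficients $\pm i^m\E Y_\pm^m$ are bounded in modulus by the same uniform constant. Combined with the uniform bound $k\le 2$ on the length of the combination, this fits the definition of a quasi-c.f.\ given just before the proposition, completing the proof. I do not anticipate a real obstacle here; the only point that requires a little care is verifying that the normalized measures $Y_\pm^m\,dP_Y/\E Y_\pm^m$ are indeed probability measures (and hence that $\phi_\pm$ are bona fide characteristic functions) rather than simply finite positive measures, and handling the degenerate case $\E Y_\pm^m=0$ by convention.
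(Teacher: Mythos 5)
Your proposal is correct and follows essentially the same route as the paper: differentiate under the expectation to get $N^{(m)}(t)=i^m\E[Y^m e^{itY}]$, split $Y^m$ into $Y_+^m$ and $(-1)^mY_-^m$, and recognize each normalized piece as a genuine c.f.\ with coefficient bounded by $\E|Y|^m$. The paper merely organizes the cases slightly differently (treating $m$ even, or one of $\E Y_\pm^m=0$, as a single-term combination), which your convention for the degenerate cases already covers.
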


\begin{proof}[Proof of Proposition~\ref{prop:quasi-cf}] 
Let us exclude the trivial case when $\E|Y|^m=0$. 
If $m$ is even or $\E Y_+^m=0$ or $\E Y_-^m=0$, then $\widetilde{N^{(m)}}(\cdot):=\frac{\E Y^m e^{iY\cdot}}{\E Y^m}$ is a c.f., and $N^{(m)}=(i^m\E Y^m)\widetilde{N^{(m)}}$, so that $N^{(m)}$ is a quasi-c.f. In the remaining case one has $\E Y_+^m>0$ and $\E Y_-^m>0$, so that one can similarly write 
$N^{(m)}(\cdot)=i^m\E Y_+^m\, \frac{\E Y_+^m e^{iY\cdot}}{\E Y_+^m} 
+ (-i)^m\,\E Y_-^m \frac{\E Y_-^m e^{iY\cdot}}{\E Y_-^m}$. 
\end{proof}

\paragraph{}
\label{quick-proof}
\emph{A quick proof of Nagaev's nonuniform BE bound \eqref{eq:BE nonunif}} can be easily obtained based on Theorem~\ref{th:ineq k}. 
%
Indeed, let $T=c_T\sqrt{n}/\be_3$, where $\be_3:=\E|X_1|^3$ and $c_T$ is a small enough positive real constant. Let $A\O B$ mean $|A|\le CB$ for some absolute constant $C$. 
Let $X:=S/\sqrt n$. 
If $T\le1$ then $1\O\frac{\be_3}{\sqrt n}$. So, for all real $x\ge0$, by the Markov and Rosenthal inequalities, 
$(1+x^3)\P(X\ge x)\le1+\E|X|^3\O 1+\frac{\be_3}{\sqrt n}\O\frac{\be_3}{\sqrt n}$ and similarly 
$(1+x^3)\P(Z\ge x)\O\frac{\be_3}{\sqrt n}$, whence \eqref{eq:BE nonunif} follows. 

It remains to consider the case $T>1$. 
Note that then $n>(\be_3/c_T)^2\ge3$ and hence $n\ge4$ provided that $c_T\le1/\sqrt3$. 

In view of the uniform BE bound, Theorem~\ref{th:ineq k}, and \eqref{eq:La_k}, in order to prove \eqref{eq:BE nonunif} 
it is enough to show that 
$\GG\big(r_{1,f}'''(\al\cdot)-r_{1,g}'''(\al\cdot)\big)\O\be_3/\sqrt n$ and 
$
\GG\big(r_{2,f}'''(\al\cdot)\big)\O\be_3/\sqrt n$ over $\al\in(0,1]$, where 
$r_{j,f}(\cdot):=
M_j(\frac{\cdot}T)f(\cdot)$, $j=1,2$, 
the $M_j$'s are as in \eqref{eq:M1,M2}, $M$ is (say) as in \eqref{eq:Mmy special}, $f$ is the c.f.\ of $X:=S/\sqrt n$, and $g(\cdot):=e^{-\cdot^2/2}$ (so that $g$ may be considered as a special case of $f$). 
One has  
\begin{equation}
r_{j,f}'''(\cdot)=\sum_{q=0}^3{3\choose q}\frac1{T^q}M_j^{(q)}\Big(\frac\cdot T\Big) f^{(3-q)}(\cdot). 
\end{equation}
By \eqref{eq:Mmy special} and \eqref{eq:pMy-4th}, $M_1$ is the c.f.\ of a distribution with a finite 4th moment, whereas $M_2=\ka M_1'$. Hence, by Proposition~\ref{prop:quasi-cf}, $M_j^{(q)}$ is a quasi-c.f.\ for each pair $(j,q)\in\{1,2\}\times\{0,1,2,3\}$, and then so is $M_j^{(q)}(\frac\cdot T)$. 
Similarly, $f$ is the c.f.\ of the r.v.\ $X$ with $\E|X|^3\O 1+\frac{\be_3}{\sqrt n}\O1$, by the Rosenthal inequality and the case condition $T>1$. So, again by Proposition~\ref{prop:quasi-cf}, $f^{(3-q)}$ is a quasi-c.f.\ for each $q\in\{0,1,2,3\}$. Thus, $M_j^{(q)}(\frac{\al\cdot}T)f^{(3-q)}(\al\cdot)$ is a quasi-c.f.\ and, by Proposition~\ref{prop:b-var}, $\GG\big(M_j^{(q)}(\frac{\al\cdot}T)f^{(3-q)}(\al\cdot)\big)\O1$,  
for each $(j,q)\in\{1,2\}\times\{0,1,2,3\}$. 
Therefore and because $T>1$, 
\begin{equation}
	\GG\Big({3\choose q}\frac1{T^q}M_j^{(q)}(\tfrac{\al\cdot}T)f^{(3-q)}(\al\cdot)\Big)
	\O\frac1{T^q}\le\frac1T\O\frac{\be_3}{\sqrt n} \quad
	\text{for each $(j,q)\in\{1,2\}\times\{1,2,3\}$;} 
\end{equation}
note that $q=0$ is not included here. 

It remains to show that $\GG_{1\al}(f'''-g''')\O\frac{\be_3}{\sqrt n}$ and $\GG_{2\al}(f''')\O\frac{\be_3}{\sqrt n}$ 
for $\al\in(0,1]$, 
where 
\begin{equation}\label{eq:G al}
	\GG_{j\al}(h)(x):=
\GG\big(M_j(\tfrac{\al\cdot}T)h(\al\cdot)\big)(x).
\end{equation}
 
For $j\in\{0,1,2,3\}$, introduce $f_1^{(j)}(t):=
\big(\frac{\dd}{\dd t}\big)^j f_1(t)$ and $\fnj j(t):=f_1^{(j)}(t/\sqrt n)$, where $f_1$ denotes the c.f.\ of $X_1$. 
Similarly, starting with $g_1:=g$ in place of $f_1$, define $\gnj j$, and then let 
$\dnj j:=\fnj j-\gnj j$ and $\hnj j:=\big|\fnj j\big|\vee\big|\gnj j\big|$; omit superscripts ${}^{(0)}$ and ${}^{[0]}$. 
Note that $f=\fn^n$ and hence $\sqrt{n}f'''=f_{31}+f_{32}+f_{33}$, where 
$f_{31}:=(n-1)(n-2)\fn^{n-3}\big(\fnj1\big)^3$, $f_{32}:=3(n-1)\fn^{n-2}\fnj1\fnj2$, and $f_{33}:=\fn^{n-1}\fnj3$; do similarly with $g$ and $g_1$ in place of $f$ and $f_1$. 
By Proposition~\ref{prop:quasi-cf}, $M_j(\tfrac{\al\cdot}T)f_{33}/\be_3$ is a quasi-c.f.\ and hence, by Proposition~\ref{prop:b-var}, $\GG_{j\al}(f_{33})\O\be_3$, for $j\in\{1,2\}$. 

So, it suffices to show that $\GG_{1\al}(f_{3k}-g_{3k})\O\be_3$ and $
\GG_{2\al}(f_{3k})\O\be_3$ for $k\in\{1,2
\}$.
%
This can be done in a straightforward manner using the following estimates for $j\in\{0,1,2,3\}$ and $|t|\le T$:\quad 
$M_1\O1$, $
M_2(\frac t{
T})\O
\frac{|t|}{
T}\O|t|\be_3/\sqrt n$, $\hn(t)^{n-j}\le e^{-ct^2}$ (where $c$ is a positive real number depending only on the choice of $c_T$), 
$\hnj1(t)\O|t|
/\sqrt n$, $\hnj2(t)\O1$, $|\dnj j(t)|\O\be_3(|t|/\sqrt n)^{3-j}$, and hence 
$\fn^{n-j}(t)-\gn^{n-j}(t)\O|t|^3 e^{-ct^2}\be_3/\sqrt n$; cf.\ e.g.\ \cite[Ch.\ V, Lemma~1]{pet75}. 
For instance, $|f_{31}-g_{31}|\O n^2(D_{311}+D_{312})$, 
where 
$D_{311}(t):=
\big(|\fn^{n-3}-\gn^{n-3}|\big(\hnj1\big)^3\big)(t)\O|t|^3 e^{-ct^2}\frac{\be_3}{\sqrt n}\big(\frac{|t|}{\sqrt n}\big)^3$ and 
$D_{312}(t):=\big(\hn^{n-3}\big(\hnj1\big)^2|\dnj1|\big)(t) 
\O e^{-ct^2}\,\big(\frac{|t|}{\sqrt n}\big)^2\,\be_3\big(\frac{|t|}{\sqrt n}\big)^2$, so that  
$\GG_{1\al}(f_{31}-g_{31})
\O 
\int_{-\infty}^\infty(t^6+t^4)e^{-ct^2}\be_3\,\frac{\dd t}{|t|}
\O\be_3$.  
\qed

Of course, the above argument is rather crude and yet it demonstrates that the method 
based on the smoothing inequalities \eqref{eq:ineq_k} is quite effective. 
It also strongly suggests that this method 
can be used further, in order to obtain an explicit and appropriately small upper bound on the constant factor $c_\nu$. 

Let us now discuss some of the refinements that could be used within the general framework of the above \emph{quick proof of \eqref{eq:BE nonunif}}. 

There, in particular, we 
needed to bound 
\begin{gather}
L(H):=\int_0^1 [\GG\big(H(\al\cdot)\big)(x)-\GG\big(H(\cdot)\big)
(x)]\,3\al^2\dd\al,
\end{gather}
where $H$ is of the form $M_1(\tfrac{\#}T)(f_{3k}-g_{3k})$ or $M_2(\tfrac{\#}T)f_{3k}$ for $k\in\{1,2\}$ -- recall \eqref{eq:ineq_k}, \eqref{eq:r}, and \eqref{eq:La_k}. 
Tacitly, that bounding was then done using the trivial inequalities  
\begin{equation}\label{eq:extra 2}
|L(H)|
\le2\sup_{\al\in(0,1]}|\GG\big(H(\al\cdot)\big)(x)|
\le2\, \frac1{2\pi}\,\int_\infty^\infty 
|H(t)|\frac{\dd t}{|t|},     	
\end{equation}
where in turn we used the definition \eqref{eq:GG} of $\GG$ and the trivial identity $|e^{-itx}|=1$ for real $t$ and $x$; 
the integral in \eqref{eq:extra 2} exists even in the Lebesgue sense, since $|f_{3k}(t)-g_{3k}(t)|=O(|t|)$ and $|M_2(t)|=O(|t|)$. 

In fact, the factor $2$ in the last bound in \eqref{eq:extra 2} on $|L(H)|$ can be removed, so that one have 
\begin{equation}\label{eq:w/out 2}
	|L(H)|\le\frac1{2\pi}\,\int_\infty^\infty |H(t)|\frac{\dd t}{|t|}. 
\end{equation}
Indeed, first of all note here that the factor $\al$ can be easily moved, in a way, from the argument of the general and hard to control function $H$ into that of the much simpler and more specific exponential function, using the simple identity 
\begin{equation}
	\GG\big(H(\al\cdot)\big)(x)=\GG\big(H(\cdot)\big)(\tfrac x\al), 
\end{equation}
which implies that 
\begin{align}
L(H)
&=\frac i{2\pi}\,\int_\infty^\infty I(tx)H(t)\frac{\dd t}t, \quad\text{where} \label{eq:L(H)=}\\ 	
I(u)&:=\int_0^1 (e^{-i u/\al}-e^{-i u})\,3\al^2\dd\al. 
\end{align}
%
%
Now \eqref{eq:w/out 2} follows immediately from 


\begin{proposition}\label{prop:w/out 2}
The expression 
$g( u):=|I(u)|^2$ 
is even in $ u\in\R$ and (strictly) increases from $0$ to $1$ as $| u|$ increases from $0$ to $\infty$; in particular, it follows that $|I(u)|\in[0,1)$ for all real $u$. 
Moreover, the function $g$ has the following generalized concavity property: 
$-u^3\big(u^{-5}g'(u)\big)'$ is completely monotone in $u>0$ (in Bernstein's sense -- see e.g.\ \cite[Chapter~2]{phelps}); in particular, $g(v^{1/6})$ is concave in $v>0$.  
\end{proposition}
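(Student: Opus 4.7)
The plan is to reduce the proposition to a single clean representation $g(u) = u^2|H(u)|^2$, with $H(u) := \int_0^\infty e^{-ius}(s+1)^{-3}\dd s$, and then apply Bernstein's theorem to a Stieltjes-type integral. Evenness of $g$ is immediate from $I(-u) = \overline{I(u)}$ for real $u$, so I focus on the other claims. Integrating by parts in the definition of $I$, substituting $\beta = u/\alpha$ (for $u>0$), and rearranging gives $I(u) = -iu\, e^{-iu} H(u)$; hence $g(u) = u^2 |H(u)|^2$.

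Next I would write the survival function $(s+1)^{-3}$ as the Bernstein mixture $\tfrac12\int_0^\infty \lambda^2 e^{-\lambda}\, e^{-\lambda s}\dd\lambda$ to obtain $H(u) = \tfrac12\int_0^\infty \lambda^2 e^{-\lambda}/(\lambda+iu)\dd\lambda$, and then apply the partial-fraction identity $[(\lambda+iu)(\mu-iu)]^{-1} = (\lambda+\mu)^{-1}[(\lambda+iu)^{-1} + (\mu-iu)^{-1}]$ together with a $\lambda\leftrightarrow\mu$ symmetrisation to arrive at the positive representation
\[
g(u) = \frac{u^2}{2}\int_0^\infty \frac{sK(s)}{s^2+u^2}\dd s, \qquad K(s) := s^2 e^{-s}\int_0^\infty \frac{t^2 e^{-t}}{s+t}\dd t \ge 0.
\]
A short symmetry argument yields $\int_0^\infty sK(s)\dd s = 2$, so writing $u^2/(s^2+u^2) = 1 - s^2/(s^2+u^2)$ gives the dual form $g(u) = 1 - \tfrac12\int_0^\infty s^3 K(s)/(s^2+u^2)\dd s$. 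From this, $g$ strictly increases in $|u|$ from $g(0) = 0$ to $\lim_{u\to\infty} g(u) = 1$ (by monotone convergence), and so $|I(u)| = \sqrt{g(u)} \in [0,1)$.

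For the complete-monotonicity claim, differentiating the representation of $g$ yields (after algebra) $(5g'(u) - ug''(u))/u^3 = 4\int_0^\infty s^3 K(s)(s^2 + 2u^2)/[u^2(s^2+u^2)^3]\dd s$. The pivotal identity is $K(s) = 2s^2 E_3(s)$ with the generalised exponential integral $E_3(s) := \int_1^\infty e^{-s\mu}/\mu^3\dd\mu$, which follows by writing $1/(s+t)$ in the inner integral of $K$ as a Laplace integral and evaluating. Substituting $s = u\tau$ then causes all explicit factors of $u$ to cancel except inside $E_3$, giving
\[
\frac{5g'(u) - ug''(u)}{u^3} = 8\int_0^\infty \frac{\tau^5(\tau^2+2)}{(\tau^2+1)^3}\,E_3(u\tau)\dd\tau.
\]
Since $E_3(u\tau) = \tau^2\int_\tau^\infty \kappa^{-3} e^{-u\kappa}\dd\kappa$ is manifestly the Laplace transform of a non-negative function of $\kappa$, it is completely monotone in $u>0$ for each fixed $\tau > 0$; with the positive weight in $\tau$ and Fubini, the whole expression reduces to $8\int_0^\infty e^{-u\kappa} A(\kappa)\kappa^{-3}\dd\kappa$ with $A(\kappa) := \int_0^\kappa \tau^7(\tau^2+2)/(\tau^2+1)^3 \dd\tau \ge 0$, so Bernstein's theorem yields complete monotonicity in $u > 0$. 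Concavity of $g(v^{1/6})$ is equivalent by the chain rule to non-negativity of $(5g' - ug'')/u^3$ at $u = v^{1/6}$, which is the $n=0$ case of the Bernstein representation. The hardest part of the plan is finding the identity $K(s) = 2s^2 E_3(s)$; without it the substitution $s = u\tau$ does not expose the Laplace-transform structure required for Bernstein's theorem.
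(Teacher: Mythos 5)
Your argument is correct; I checked the key identities ($I(u)=-iu\,e^{-iu}H(u)$, the partial--fraction symmetrisation, $\int_0^\infty sK(s)\,ds=2$, $K(s)=2s^2E_3(s)$, and the cancellation of powers of $u$ under $s=u\tau$), and in fact your final Laplace density $8A(\kappa)\kappa^{-3}$ simplifies to $2\kappa^5/(\kappa^2+1)^2$, which is exactly the density the paper arrives at. The overall strategy coincides with the paper's --- reduce $I$ to the exponential integral $\mathcal{E}_3(u)=\int_u^\infty e^{-iz}z^{-3}\,dz$ (your $H$ is just $u^2e^{iu}\mathcal{E}_3(u)$), differentiate, and exhibit $-u^3(u^{-5}g')'$ as a Laplace transform of a nonnegative function --- but the middle of your proof is genuinely different. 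The paper works directly with the complex quantity $\Re\big((4+iu)e^{iu}\mathcal{E}(u)\big)$ and converts it to a Laplace transform term by term; it then has to \emph{deduce} the positivity of $g'$ from the complete monotonicity (via $g_1$ decreasing and $g_1(\infty-)=0$) and to establish $g(\infty-)=1$ separately through the asymptotic expansion $\mathcal{E}_3(u)=-ie^{-iu}u^{-3}+O(u^{-4})$. You instead first derive the real, positive Cauchy--Stieltjes representation $g(u)=1-\tfrac12\int_0^\infty s^3K(s)(s^2+u^2)^{-1}\,ds$, which makes the evenness, strict monotonicity, $g(0)=0$, the limit $1$ at infinity, and hence $|I|<1$ all immediate, independently of the complete-monotonicity claim; the price is the extra double-integral symmetrisation and the identity $K=2s^2E_3$. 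Your route buys a cleaner and more self-contained proof of the first half of the proposition (no asymptotics of $\mathcal{E}_3$ needed), at the cost of somewhat more algebra in the second half, where the paper's direct computation of $2u^3g_1'(u)=-2\int_0^\infty e^{-us}s^5(s^2+1)^{-2}\,ds$ is shorter.
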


Thus, the conclusion in Proposition~\ref{prop:w/out 2} that $|I(u)|\in[0,1)$ for all real $u$ can be seen as a rather sophisticated replacement for the trivial identity $|e^{-itx}|=1$ for real $t$ and $x$, which latter was used to obtain the rightmost bound in \eqref{eq:extra 2}. 

Moreover, one can easily obtain (and then use in \eqref{eq:L(H)=}) an upper bound on $|I(u)|$ which is significantly less than $1$ for small enough values of $|u|$. This can be done by closely bounding the values of $|I(u)|$ for a finite number of values of $u$ and then using the monotonicity property of $|I|$ provided by  Proposition~\ref{prop:w/out 2}. 

Graphs of $\Re I$, $\Im I$, and $|I|$ over the interval $[-6\pi,6\pi]$ are shown in Figure~\ref{fig:Untitled-1}. 
It seems plausible that $g(u)$ is concave in $u>0$; however, that probably would be hard to prove. 

\begin{figure}
[htbp]
	\centering
		\includegraphics[width=1.00\textwidth]{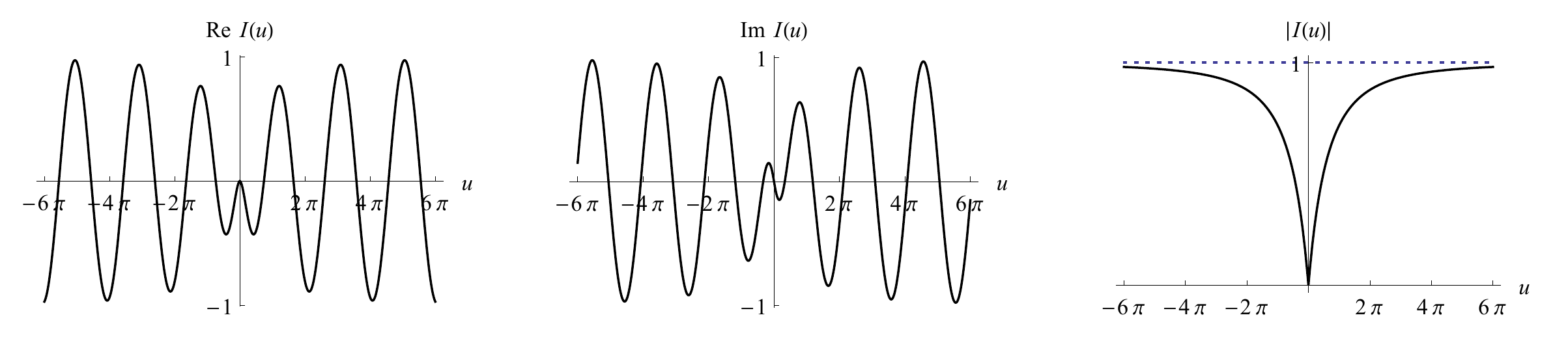}
	\caption{Graphs of $\Re I$, $\Im I$, and $|I|$. }
	\label{fig:Untitled-1}
\end{figure}

\begin{proof}[Proof of Proposition~\ref{prop:w/out 2}]  
Note that $I(-u)=\ol{I(u)}$ for all real $u$.  
So, the function $g=I\ol I$ is indeed even. 

Take now any real $u>0$. 
Integrating by parts and then changing the integration variable, one has 
\begin{equation}
	iu^{-3}I(u)=u^{-2}\int_0^1 e^{-i u/\al}\,\al\dd\al
	=\EE(u):=\EE_3(u),\quad\text{where}\quad\EE_j(u):=\int_u^\infty e^{-iz}\frac{\dd z}{z^j} 
\end{equation}
for $j>0$. 
So, $g(u)=u^6\,\EE(u)\ol{\EE(u)}$, 
\begin{align*}
	g_1(u)&:=\tfrac12\,u^{-5}g'(u)=3\EE(u)\ol{\EE(u)}-u^{-2}\Re\big( e^{iu}\EE(u)\big), \\
	u^3 g_1'(u)&=\frac1{u^2}-\Re\big((4+iu)e^{iu}\EE(u)\big) \\ 
	&=\frac1{u^2}-\Re\Big((4+iu)\int_u^\infty e^{-i(z-u)}\frac{\dd z}{z^3}\Big) \\ 
	&=\frac1{u^2}-\Re\Big((4+iu)\int_0^\infty e^{-iv}\frac{\dd v}{(v+u)^3}\Big) \\ 
	&=\int_0^\infty e^{-u s}s\dd s-\Re\Big((4+iu)\int_0^\infty e^{-iv}\dd v \int_0^\infty \tfrac12 e^{-(v+u)s}s^2\dd s\Big).  
\end{align*}
Noting now that $\int_0^\infty e^{-iv}e^{-v s}\dd v=\frac1{s+i}$ and $\Re\frac{4+iu}{s+i}=\frac{4s+u}{s^2+1}$ for all real $s>0$, and introducing $w_1(s):=2\frac{s-s^3}{s^2+1}$ and $w_2(s):=\frac{-s^2}{s^2+1}$, write
\begin{align*}
2u^3 g_1'(u)&=\int_0^\infty e^{-u s}w_1(s)\dd s+\int_0^\infty e^{-u s}w_2(s)\,u\dd s \\ 
&=\int_0^\infty e^{-u s}w_1(s)\dd s+\int_0^\infty e^{-u s}w_2'(s)\dd s 
=-2\int_0^\infty e^{-u s}\frac{s^5\dd s}{(s^2+1)^2},    
\end{align*}
which verifies the last sentence of the statement of Proposition~\ref{prop:w/out 2}; the second equality in the above display was obtained by taking the integral $\int_0^\infty e^{-u s}w_2(s)\,u\dd s$ by parts.  
Moreover, it follows that $g_1(u)$ is decreasing in $u>0$. 
At that, $g_1(\infty-)=0$, since $\EE(\infty-)=0$.  
So, on $(0,\infty)$ one has the following: $g_1>0$ and hence $g'>0$, and therefore $g$ is increasing. 
Since $g$ is even and obviously continuous, it follows that indeed $g(u)$ increases in $|u|$. 
Clearly, $g(0)=|I(0)|^2=0$. 
It remains only to show that $g(\infty-)=1$. 
Toward that end, integrate by parts to obtain the recursive relation $\EE_j(u)=-ie^{-iu}u^{-j}+ij\EE_{j+1}(u)$ for all $j>0$. In particular, it follows that $\EE_3(u)=-ie^{-iu}u^{-3}+3i\EE_4(u)$ and $\EE_4(u)\O u^{-4}+|\EE_5(u)|\le u^{-4}+\int_u^\infty\frac{\dd z}{z^5}\O u^{-4}$. 
Thus, $\EE(u)=\EE_3(u)=-ie^{-iu}u^{-3}+O(u^{-4})=-iu^{-3}\big(e^{-iu}+o(1)\big)$ and $g(u)=u^6\,|\EE(u)|^2\to1$ as $u\to\infty$. 
\end{proof}



One will also could use a better upper bound on $|f(t)|$ for a given real value of $t$, where $f$ is the c.f.\  of a r.v.\ $X$, say with $\E X=0$, $\E X^2=1$, and a given value of $\rho:=\E|X|^3$. Since 
\begin{equation}\label{eq:optimiz}
	|f(t)|=\sqrt{\E^2\cos tX+\E^2\sin tX}
	=\sup_{\th\in[0,2\pi]}(\cos\th\,\E\cos tX+\sin\th\,\E\sin tX)
	=\sup_{\th\in[0,2\pi]}\E\cos(tX-\th), 	
\end{equation}
the best upper bound on $|f(t)|$ under the given conditions is 
\begin{align}
	\mathcal{S}(t,\rho)
	&:=\sup\{|f(t)|\colon\E X=0, \E X^2=1, \E|X|^3=\rho\} \label{eq:S(t,rho)} \\ 
	&=\sup\{\E\cos(tX-\th)\colon\E X=0, \E X^2=1, \E|X|^3=\rho, \card\supp X\le4, \th\in[0,2\pi]\} \notag
	\\ 
	&=\textstyle{
	\sup\Big\{\sum\limits_1^4 p_j\cos(tx_j-\th)\colon\sum\limits_1^4 p_j=1, \sum\limits_1^4 p_j x_j=0, \sum\limits_1^4 p_j x_j^2=1, \sum\limits_1^4 p_j|x_j|^3=\rho
	},  \notag\\
	&\qquad\qquad\qquad\qquad\qquad\qquad\qquad\qquad\qquad\qquad\qquad\quad p_1,\dots,p_4\ge0,\ \th\in[0,2\pi]\Big\}; \notag  
\end{align}
$\card\supp$ denotes the cardinality of the support of (the distribution of) $X$; for the second equality here, one can use the known results by Hoeffding \cite{hoeff-extr} or Karr \cite{karr} or, somewhat more conveniently, Winkler \cite{winkler88} or Pinelis \cite[Propositions~5 and 6(v)]{ext}. 
Thus the optimization problem reduces to one in 9 variables: $p_1,\dots,p_4,x_1,\dots,x_4,\th$; in fact, one can easily solve the linear (or, more precisely, affine) restrictions $\sum_1^4 p_j=1, \sum_1^4 p_j x_j=0, \sum_1^4 p_j x_j^2=1, \sum_1^4 p_j|x_j|^3=\rho$ for $p_1,\dots,p_4$, and then only 5 variables will remain: $x_1,\dots,x_4,\th$, with the additional restrictions on $x_1,\dots,x_4$ to provide for the conditions $p_1,\dots,p_4\ge0$. 
For any given pair of values of $(t,\rho)$, it will not be overly hard to find a close upper bound on the supremum $\mathcal{S}(t,\rho)$. A difficulty here is that one has to deal with two parameters, $t$ and $\rho$, and obtain a close majorant of $\mathcal{S}(t,\rho)$ with, at least, discoverable and tractable patterns of monotonicity/convexity in $t$ and $\rho$, if not with a more or less explicit expression. 
Apparently the main difficulty in dealing with $\mathcal{S}(t,\rho)$ will be that the target function $\cos(t\cdot-\th)$ oscillates, whereas the function $(\cdot-w)_+^3$ in \cite[Lemma~3.4]{pin-hoeff-AIHP} is monotonic. 

Similar methods can be used to find a good upper bound on $|f(t)-g(t)|$, 
where $g=e^{-\cdot^2/2}$, the c.f.\  of the standard normal r.v.\ $Z$; in particular, one can start here by writing  
\begin{equation*}
	|f(t)-g(t)|
	=\sup_{\th\in[0,2\pi]}\big(\E\cos(tX-\th)-\E\cos(tZ-\th)\big)	=\sup_{\th\in[0,2\pi]}\big(\E\cos(tX-\th)-g(t)\cos\th\big)  	
\end{equation*}
in place of \eqref{eq:optimiz}. 
At this point, one also has an option to use Stein's method to bound $\E\cos(tX-\th)-\E\cos(tZ-\th)$.

\section{Constructions of the smoothing filter $M$
}\label{smoothing}
The following proposition was somewhat implicit in the paper \cite{bohman} by Bohman. 
\begin{proposition}\label{prop:bohman}
Let $p$ be any symmetric probability density function (p.d.f.) such that the function $\cdot p(\cdot)$ is integrable on $\R$. 
Take any real 
\begin{equation}\label{eq:ka_* Bo}
	\ka\ge\ka_*:=\frac1{\int_\R|x|p(x)\dd x}. 
\end{equation}
Let $\hat p$ stand, as usual, for the Fourier transform of $p$ \big(so that $\hat p(\cdot)=\int_\R e^{i x \cdot}p(x)\dd x$\big), and let then $\hat p'$ denote the derivative of $\hat p$ \big(which exists, since $\int_\R|x|p(x)\dd x<\infty$\big). 
Then the function 
\begin{equation}\label{eq:Mmy}
	M:=\hat p+i\ka\hat p' 
\end{equation}
is such that inequalities \eqref{eq:praw} hold 
for all r.v.\ $X$, all real $T>0$, and all real $x$. 
\end{proposition}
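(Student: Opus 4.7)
The plan is to represent $M_T(\cdot)f(\cdot)$, where $f$ is the c.f.\ of $X$, as a linear combination of three c.f.'s of r.v.'s that have densities, apply the inversion identity \eqref{eq:ident0} term by term, and then reduce the remaining inequality to a Chebyshev-type correlation bound. By the remark after \eqref{eq:praw} that the lower and upper bounds in \eqref{eq:praw} follow from each other via $X\mapsto-X$, it will suffice to prove the upper bound $\P(X\le x)-\tfrac12\le\GG\big(M_T(\cdot)\E e^{iX\cdot}\big)(x)$.

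First, I would introduce a r.v.\ $Y$ with density $p$ and, independently, a r.v.\ $Y_+$ with the p.d.f.\ $p^+(y):=2\ka_*\,y\,p(y)\,\ii\{y\ge0\}$, which is a bona fide p.d.f.\ by \eqref{eq:ka_* Bo}. Splitting $\hat p'(s)=i\,\E Y e^{iYs}$ according to the sign of $Y$ and using the symmetry of $p$, one obtains the identity
\begin{equation*}
M(s)=\E e^{iYs}-\tfrac\al2\,\E e^{iY_+s}+\tfrac\al2\,\E e^{-iY_+s},\qquad\al:=\ka/\ka_*\ge1.
\end{equation*}
Taking $X,Y,Y_+$ mutually independent and multiplying by $f(t)$, the product $M_T(t)f(t)$ becomes the same linear combination of the c.f.'s of $X+Y/T$, $X+Y_+/T$, and $X-Y_+/T$. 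Since each of these three sums has a density, \eqref{eq:ident0} together with the linearity of $\GG$ yields
\begin{equation*}
\GG(M_T f)(x)=\P(X+Y/T\le x)-\tfrac12-\tfrac\al2\,\P(X+Y_+/T\le x)+\tfrac\al2\,\P(X-Y_+/T\le x).
\end{equation*}

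Next, setting $F_X(u):=\P(X\le u)$, $a(y):=F_X(x+y/T)-F_X(x)$, and $b(y):=F_X(x)-F_X(x-y/T)$, with both $a$ and $b$ nonnegative and nondecreasing in $y\ge0$, a short computation that rewrites each of the three probabilities above as an integral against $p$ or against $2\ka_*\,y\,p(y)$ on $[0,\infty)$ reduces the desired upper bound to
\begin{equation*}
\al\ka_*\int_0^\infty[a(y)+b(y)]\,y\,p(y)\,\dd y\ge\int_0^\infty[b(y)-a(y)]\,p(y)\,\dd y.
\end{equation*}
The final step is Chebyshev's sum inequality on $[0,\infty)$ with the measure $p(y)\,\dd y$ of total mass $\tfrac12$, applied to each of the pairs $(a(\cdot),y\mapsto y)$ and $(b(\cdot),y\mapsto y)$ of nondecreasing functions; using $\int_0^\infty y\,p(y)\,\dd y=\tfrac1{2\ka_*}$, this gives $\ka_*\int_0^\infty a(y)\,y\,p(y)\,\dd y\ge\int_0^\infty a(y)\,p(y)\,\dd y$ and the same with $b$ in place of $a$. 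Adding these and invoking $\al\ge1$ dominates the right-hand side by $\al\ka_*\int_0^\infty(a+b)y\,p\,\dd y$, while the left-hand side is at most $\int_0^\infty(a+b)\,p\,\dd y$ because $\int_0^\infty a\,p\,\dd y\ge0$; this closes the argument.

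The main obstacle, I expect, is not any single step but spotting the decomposition of $M$ into these three particular c.f.'s and recognizing that the hypothesis $\ka\ge\ka_*$ is precisely what the Chebyshev step requires; once this is in hand, the remaining bookkeeping, both Fourier-analytic and probabilistic, is direct.
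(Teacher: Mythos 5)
Your proof is correct, but it takes a genuinely different route from the one the paper (implicitly) supplies. The paper never proves Proposition~\ref{prop:bohman} in isolation; instead it is subsumed by the ``tempered tilting'' argument of Section~\ref{smoothing}, of which \eqref{eq:Mmy} is the harsh special case \eqref{eq:tpBo}: there one forms the signed density $\tp(x)=p(x)(1-\ka x)$, observes that its primitive $\tF(\cdot)=\int_{-\infty}^{\cdot}\tp$ is unimodal with $\tF(-\infty+)=0$, $\tF(\infty-)=1$ and $\tF(0)=\tfrac12+\tfrac{\ka}{2\ka_*}\ge1$, concludes the \emph{pointwise} domination $\ii\{y\ge0\}\le\tF(y)$, and then convolves with the law of $X$ and applies Proposition~\ref{prop:b-var}. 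You instead work on the Fourier side: your three-term decomposition of $M$ into c.f.'s of $Y/T$ and $\pm Y_+/T$ is precisely the Fourier transform of the paper's $\tp$ with $xp(x)$ split by sign, and after term-by-term inversion you establish only the \emph{integrated} inequality $\al\ka_*\int_0^\infty(a+b)\,y\,p\,\dd y\ge\int_0^\infty(b-a)\,p\,\dd y$ via Chebyshev's correlation inequality. Both arguments use $\ka\ge\ka_*$ at exactly the analogous spot (the paper to force $\tF(0)\ge1$, you to make the Chebyshev step close), and your reduction of the lower bound in \eqref{eq:praw} to the upper one via $X\mapsto-X$ is the one the paper itself licenses. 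The paper's pointwise domination is the stronger statement (it implies your integrated inequality for any d.f.\ $F_X$ at once) and is arguably more economical; your version buys a self-contained derivation that never leaves the realm of bona fide c.f.'s and probabilities. One cosmetic slip: in your final sentence the roles of ``left-hand side'' and ``right-hand side'' are swapped --- it is the right-hand side $\int_0^\infty(b-a)\,p\,\dd y$ that is at most $\int_0^\infty(a+b)\,p\,\dd y$ because $\int_0^\infty a\,p\,\dd y\ge0$ --- but the chain of inequalities you intend is unambiguous and valid.
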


Because of the symmetry of $p$, in the conditions of Proposition~\ref{prop:bohman} the function $\hat p=\Re M$ is even and $\ka\hat p'=\Im M$ is odd, so that conditions \eqref{eq:M1,M2} hold. 
In order to satisfy the conditions \eqref{eq:M=0} and $M\in C^3$ as well, one may choose the symmetric p.d.f.\ $p_{0,2}$ defined by the formula 
\begin{equation}
	p_{0,2}(x):=
	\frac{32 \pi ^3}3\, \frac{1-\cos x}{x^2 \left(x^2-4 \pi ^2\right)^2}
\end{equation}
for real $x\notin\{-2\pi,0,2\pi\}$ and then let $M$ be as in \eqref{eq:Mmy} with $p=p_{0,2}$: 
\begin{equation}\label{eq:Mmy special}
	M=M_{0,2}:=\widehat{p_{0,2}}+i\ka\widehat{p_{0,2}}\,',  
\end{equation}
with any 
\begin{equation}
	\ka\ge\ka_{0,2}:=\frac1{\int_\R|x|p_{0,2}(x)\dd x}=0.3418\dots. 
\end{equation}
Then $M_{0,2}\in C^3$, since $M'''=\hat p'''+i\ka\hat p''''$ and 
\begin{equation}\label{eq:pMy-4th}
	\int_\R x^4\, p_{0,2}(x)\dd x<\infty. 
\end{equation}
Moreover, it is clear that $p_{0,2}$ is the restriction to $\R$ of an entire analytic function of exponential type $1$; so, by 
the Paley--Wiener theory (see e.g.\ \cite[Section~43]{donoghue}), the condition \eqref{eq:M=0} holds as well, with $M_{0,2}$ in place of $M$. In fact, 
\begin{align}
	\Re M_{0,2}(t)&=\widehat{p_{0,2}}(t)=\left(
	\frac{2+\cos 2 \pi t}3\,(1-|t|)
	+\frac{\sin 2 \pi  |t|}{2 \pi } 
	\right) \ii\{|t|<1\}
	\quad\text{and} \\
\Im M_{0,2}(t)&=\ka\widehat{p_{0,2}}\,'(t)=
-\ka\,\frac{\sign t}{3}\, \big[2 \pi(1-|t|) \sin 2 \pi|t| + 4 \sin ^2(\pi t)\big] \ii\{|t|<1\}  	 
\end{align}
for all real $t$. 
Graphs of $p_{0,2}$, $\Re M_{0,2}$, and $\Im M_{0,2}$ with $\ka=\ka_{0,2}$ are shown in Fig.\ \ref{fig:KMy}.
\begin{figure}[h]	
\centering	\includegraphics[width=1.00\textwidth]{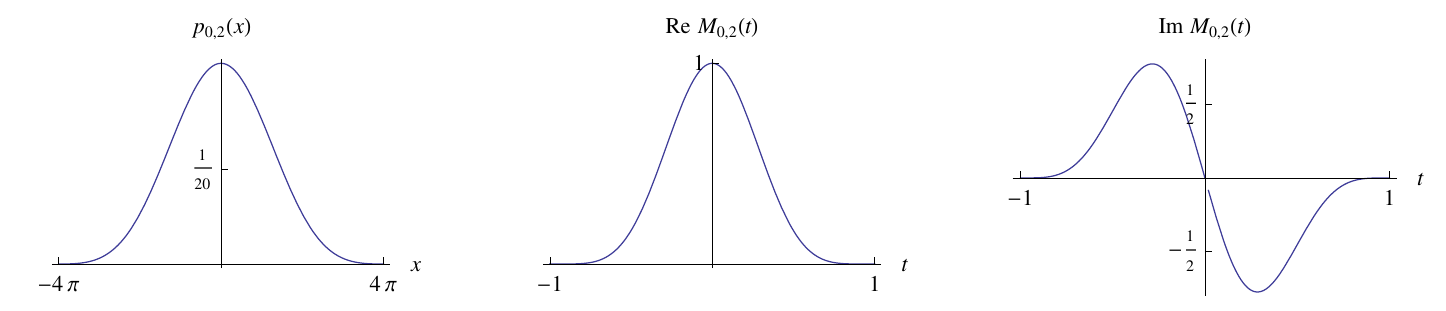}
	\caption{Graphs of $p_{0,2}$, $\Re M_{0,2}$, and $\Im M_{0,2}$ with $\ka=\ka_{0,2}$. }
	\label{fig:KMy}
\end{figure}

More generally, in order that a function $M$ as in \eqref{eq:Mmy} satisfy the conditions \eqref{eq:M=0} and $M\in C^3$, it is enough that $\hat p$ be smooth enough and such that \eqref{eq:M=0} holds with $\hat p$ in place of $M$. Therefore, the following well-known characterization 
is useful. 


\newcommand{\upmin}[1]{{}^{\raisebox{-1pt}{$\scriptstyle{#1}$}}}

\begin{proposition}\label{prop:c.f.} (See e.g.\ \cite[Theorem~4.2.4]{lukacs}.) 
A function $f\colon\R\to\C$ is the c.f.\ of an absolutely continuous distribution on $\R$ if and only if $f(0)=1$ and $f=g*\ol{g}\upmin-$ 
for some (possibly complex-valued) function $g\in L^2(\R)$. 
Here and in the sequel, as usual, the symbol $*$ stands for the convolution,  
the bar denotes the complex conjugation, $g^-(\cdot):=g(-\cdot)$, and $\ol{g}\upmin-:=(\ol{g})\upmin-=\ol{g\upmin-}$.  
\end{proposition}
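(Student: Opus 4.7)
The plan is to establish both implications via Plancherel's theorem together with the convolution--multiplication duality of the Fourier transform on $L^2(\R)$. Throughout I fix the convention $\hat h(t):=\int_\R e^{itx}h(x)\dd x$ for $h\in L^1$, extended to $L^2$ in the standard way; Parseval then reads $\|\hat h\|_2^2=2\pi\|h\|_2^2$, and I shall use the dual identities $\widehat{g*h}=\hat g\,\hat h$ and $\widehat{uv}=\frac1{2\pi}\hat u*\hat v$, as well as the conjugation rule $\widehat{\ol h}=\ol{\hat h}\upmin-$.

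For sufficiency, given $g\in L^2(\R)$ with $f=g*\ol g\upmin-$, I would first invoke Cauchy--Schwarz to conclude that $f$ is a well-defined, continuous, bounded function, namely the autocorrelation $f(t)=\langle g(\cdot-t),g\rangle_{L^2}$; in particular $f(0)=\|g\|_2^2$. Taking Fourier transforms of $f$ in the tempered-distribution sense would then yield $\hat f=\hat g\cdot\widehat{\ol g\upmin-}=\hat g\cdot\ol{\hat g}=|\hat g|^2$, which lies in $L^1(\R)$ since $\hat g\in L^2$. Fourier inversion on $L^1$ would therefore give
\begin{equation*}
	f(t)=\frac1{2\pi}\int_\R e^{-itx}|\hat g(x)|^2\dd x=\int_\R e^{ity}p(y)\dd y,\qquad p(y):=\frac1{2\pi}|\hat g(-y)|^2\ge0,
\end{equation*}
and by Plancherel $\int_\R p=\frac1{2\pi}\|\hat g\|_2^2=\|g\|_2^2=f(0)$. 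Under the normalization $f(0)=1$, this identifies $p$ as a genuine p.d.f.\ and $f$ as its c.f.

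For the necessity direction, I would start from $f=\hat p$ with $p\ge0$ a.e.\ and $\int p=1$, set $h:=\sqrt p\in L^2(\R)$ (so that $p=h\ol h$ with $h$ real), and put $g:=(2\pi)^{-1/2}\hat h$, which lies in $L^2$ by Plancherel. Applying the conjugation rule and the product-to-convolution duality to $h\ol h$, I would then obtain
\begin{equation*}
	f=\hat p=\widehat{h\ol h}=\tfrac1{2\pi}\,\hat h*\widehat{\ol h}=\tfrac1{2\pi}\,\hat h*\ol{\hat h}\upmin-=g*\ol g\upmin-,
\end{equation*}
which is precisely the required factorization.

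The main obstacle is that the convolution of two $L^2$ functions need not lie in $L^1$ and the product of two $L^2$ functions need not lie in $L^2$, so the Fourier-analytic identities above cannot be applied naively and must be justified via density arguments---for instance by approximating $h$ (respectively $g$) by Schwartz functions and passing to the $L^2$ limit, using the continuity of the convolution map $L^2\times L^2\to C_0$ afforded by Cauchy--Schwarz. Once a single Fourier convention is fixed throughout and these standard limiting procedures are carried out, the remaining bookkeeping of constants collapses to the single normalization $f(0)=\|g\|_2^2=\int p=1$.
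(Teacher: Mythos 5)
The paper offers no proof of this proposition---it is quoted verbatim from Lukacs (Theorem~4.2.4)---and your Plancherel/autocorrelation argument is exactly the standard proof of that classical criterion; it is correct, including the bookkeeping $f(0)=\|g\|_2^2=\int_\R p$ that turns the normalization $f(0)=1$ into $p$ being a genuine p.d.f. The $L^2$ technicalities you flag at the end are real but routine, and are most cleanly dispatched not via tempered distributions but by writing both directions as a single application of Parseval to inner products, e.g.
$(g*\ol{g}\upmin-)(t)=\langle g(\cdot+t),g\rangle_{L^2}
=\frac1{2\pi}\langle\widehat{g(\cdot+t)},\hat g\rangle_{L^2}
=\frac1{2\pi}\int_\R e^{-its}|\hat g(s)|^2\,\dd s$,
which simultaneously identifies the nonnegative integrable density and, read backwards with $\hat h=\widehat{\sqrt p}$ in place of $\hat g$, yields the converse without ever forming a product or convolution outside $L^1$.
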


Indeed, take any smooth enough nonzero function $g\colon\R\to\C$ such that 
$g(t)=0$ for all real $t\notin[a,b]$, where $a$ and $b$ are any real numbers such that $a<b$. 
Then,
by Proposition~\ref{prop:c.f.}, the function $f:=g*\ol{g}\upmin-/\|g\|_2^2$ is the c.f.\ of an absolutely continuous distribution on $\R$, 
$f$ is smooth enough, and $f(t)=0$ for all real $t\notin[-T,T]$, where $T:=b-a$. 
At that, if $g$ is real-valued, then $f$ is even. 
To spell-out the ``smooth enough'' condition and conclusion here, one can easily check that, if $g\in C^j$ and $h\in C^k$ for some $j$ and $k$ in $\N$ and (say) $|g(t)|+|h(t)|=0$  for some real $T>0$ and all real $t\notin[0,T]$, then $g*h\in C^{j+k}$, with $(g*h)^{(j+k)}=g^{(j)}*h^{(k)}$. 


One can use Proposition~\ref{prop:bohman} and Proposition~\ref{prop:c.f.} to optimize properties of the filter $M$ -- say by taking $g$ to be an arbitrary nonzero real-valued spline 
of a high enough order and/or with a large enough subintervals of the interval $[0,T]$, extending $g$ to $\R$ by letting $g(t):=0$ for all real $t\notin[0,T]$, letting then $f:=g*\ol{g}\upmin-/\|g\|_2^2$, defining $M$ as in \eqref{eq:Mmy}, and finally (quasi-)optimizing with respect to the parameters of the spline.

While the construction described in Proposition~\ref{prop:bohman} is comparatively simple, it appears somewhat too rigid and wasteful. Indeed, in order that the imaginary part $M_2=\ka\hat p'$ of the function $M$ in \eqref{eq:Mmy} be thrice differentiable \big(as needed or almost needed in the \emph{quick proof} beginning on page~\pageref{quick-proof}\big), the real part $M_1=\hat p$ of $M$ must be four times differentiable; equivalently \big(cf.\ \eqref{eq:pMy-4th}\big), the density $p$ must have light enough tails so that $\int_\R x^4\, p(x)\dd x<\infty$. Together with the filtering condition \eqref{eq:M=0}, the condition of extra smoothness of $M_1=\hat p$/extra lightness of the tails of $p$ may result in a smoothing filter $M$ which is not as good as it can be, thus compromising the quality of the approximation by the upper and lower bounds in \eqref{eq:ineq_k}. 

A more flexible and potentially better construction of the smoothing filter $M$ can be given as follows. 
As in Proposition~\ref{prop:bohman}, let us start with an arbitrary symmetric p.d.f.\ $p$, whose Fourier transform $\hat p$ is intended to be $M_1=\Re M$. Accordingly, let us assume right away that 
\begin{equation}\label{eq:hat p=0}
	\hat p(t)=0\quad\text{if}\quad|t|>1; 
\end{equation}
cf.\ \eqref{eq:M=0}. 
Note that the smoothing filter  
as in \eqref{eq:Mmy} is the Fourier transform of the function 
\begin{equation}\label{eq:tpBo}
x\mapsto p(x)(1-\ka x), 	
\end{equation}
which differs relatively much from the ``original'' p.d.f.\ $x\mapsto p(x)$ when $|x|$ is large. 
To address this concern, let us replace the ``large'' factor $x$ in \eqref{eq:tpBo} by a ``tempered'' and, essentially, more general factor $G(x)$ such that $G\colon\R\to\R$ is a strictly increasing odd function of bounded variation, whose Fourier--Stieltjes transform $\widehat{\dd G}(\cdot)=\int_\R e^{ix\cdot}\dd G(x)$ satisfies the condition 
\begin{equation}\label{eq:hat dG=0}
	\widehat{\dd G}(t)=0\quad\text{if}\quad|t|>\ga,  
\end{equation}
for some real $\ga>0$.   
The no-high-frequency-component condition \eqref{eq:hat dG=0} implies, by the mentioned 
Paley--Wiener theory, that the function $G$ is the restriction to $\R$ of an entire analytic function of exponential type $\ga$ and, in particular, is infinitely many times differentiable. 
Without loss of generality, assume that the function $\frac12+G$ is a d.f.  

As mentioned above, instead of the ``harsh'' tilting \eqref{eq:tpBo} of the p.d.f.\ $p$, we consider the ``tempered'' tilting: 
\begin{equation}\label{eq:tp}
x\mapsto \tp(x):=p(x)\big(1-\ka G(x)\big),  	
\end{equation}
for any real 
\begin{equation}\label{eq:ka_* temp}
	\ka\ge\ka_*:=\frac1{2\int_0^\infty p(x)G(x)\dd x}. 
\end{equation}
%
Note that $0\le G(x)<\frac12=G(\infty-)$ for all real $x>0$; also, as discussed previously, the condition \eqref{eq:hat p=0} implies that $p$ is the restriction to $\R$ of an entire analytic function, and so, $p>0$ almost everywhere on $\R$. 
Therefore and by the symmetry of $p$, one has $0<\int_0^\infty p(x)G(x)\dd x<\frac12\,\int_0^\infty p(x)\dd x=\frac14$ and hence $\ka_*>2$ and $\ka>2$. 
It follows that there exists a unique root $x_\ka\in(0,\infty)$ of the equation  
\begin{equation}
	1-\ka G(x_\ka)=0. 
\end{equation}
Hence, $\tp\ge0$ on the interval $(-\infty,x_\ka]$ and $\tp\le0$ on $[x_\ka,\infty)$, so that the function 
\begin{equation}
	\tF(\cdot):=\int_{-\infty}^\cdot\tp(y)\dd y
\end{equation}
is nondecreasing on $(-\infty,x_\ka]$ and nonincreasing on $[x_\ka,\infty)$. 
At that, $\tF(\infty-)=1$, since $p$ is an even p.d.f.\ and the bounded function $G$ is odd; also, clearly 
$\tF(-\infty+)=0$. 
Moreover, $\tF(0)=\frac12-\ka\int_{-\infty}^0 p(x)G(x)\dd x=\frac12+\ka\int_0^\infty p(x)G(x)\dd x 
=\frac12+\frac\ka{2\ka_*}\ge1$. It follows that 
\begin{equation}\label{eq:ii<}
	\ii\{y\ge0\}\le\tF(y)\quad\text{for all real $y$.}  
\end{equation}
Let now $X$ be any r.v.\ and let $f$ by its c.f.:
\begin{equation}
	f(\cdot):=\E e^{iX\cdot}. 
\end{equation}
Then $\P(X\le x)=\E\ii\{x-X\ge0\}\le\E\tF(x-X)$, by \eqref{eq:ii<}; that is, 
\begin{equation}\label{eq:le}
	\P(X\le x)\le\int_\R\tF(x-y)\P(X\in\dd y), 
\end{equation}
for all real $x$.  
Define now $M$ as the Fourier transform of $\tp$, so that  
\begin{equation}\label{eq:M1,M2 temp}
M=\hat\tp,\quad
	M_1=\Re M=\hat p,\quad\text{and}\quad 
	M_2=\Im M=i\ka\, \widehat{pG},  
\end{equation}
by \eqref{eq:tp}. 
Note that the Fourier--Stieltjes transform of the function $\int_\R\tF(\cdot-y)\P(X\in\dd y)$ is the Fourier transform of $\int_\R\tp(\cdot-y)\P(X\in\dd y)$, which in turn is $\hat\tp f=Mf
$. 
Then, in view of Proposition~\ref{prop:b-var}, \eqref{eq:le} means that the last inequality in \eqref{eq:praw} holds for $T=1$; that it holds for any real $T>0$ now follows by simple re-scaling, since \eqref{eq:ii<} obviously implies $\ii\{y\ge0\}\le\tF(Ty)$
for all real $y$ and all $T>0$. 
Similarly or using the reflection $x\mapsto-x$, one can see that the first inequality in \eqref{eq:praw} holds as well.


To compute $M_2$ in \eqref{eq:M1,M2 temp}, we need to express $\widehat{pG}$ in terms $\hat p$ and $\widehat{\dd G}$. To simplify the derivation, assume the condition $\hat p\in C^1$, as well as the previously stated conditions \eqref{eq:hat p=0} and \eqref{eq:hat dG=0}; these conditions will hold in the applications anyway. 
Then one can see that for all real $u$ 
\begin{equation}\label{eq:hat pG}
	\widehat{pG}(u)=
	\frac i{2\pi}\pv\int_{-\infty}^\infty \hat p(u-s)\widehat{\dd G}(s)\frac{\dd s}s
	=\frac i{2\pi}\int_\R \frac{\hat p(u-s)-\hat p(u)}s\,\widehat{\dd G}(s)\dd s; 
\end{equation}
the latter equality here holds because the function $G$ was assumed odd, and hence the function $\widehat{\dd G}$ is even; 
the latter integral in \eqref{eq:hat pG} may be understood in the Lebesgue sense, in view of the conditions $\hat p\in C^1$ and \eqref{eq:hat dG=0}. 
It follows from \eqref{eq:hat pG}, \eqref{eq:hat p=0}, and \eqref{eq:hat dG=0} 
that 
\begin{equation}\label{eq:hat pG=0}
	\widehat{pG}(t)=0\quad\text{if}\quad|t|>1+\ga.   
\end{equation}
To verify the first equality in \eqref{eq:hat pG}, one can write 
%
\begin{align*}
	p(x)G(x)&=\Big(\frac1{2\pi}\int_\R e^{-itx}\hat p(t)\dd t\Big)
	\, \Big(\frac i{2\pi}\pv\int_{-\infty}^\infty e^{-isx}\widehat{\dd G}(s)\frac{\dd s}s\Big) \\ 
	&=\frac1{2\pi}\int_\R e^{-iux}\dd u
	\, \frac i{2\pi}\pv\int_{-\infty}^\infty \hat p(u-s)\widehat{\dd G}(s)\frac{\dd s}s;
\end{align*}
the second equality here is justified because of the second equality in \eqref{eq:hat pG} and the inequality $\big|\int_\R \frac{\hat p(u-s)-\hat p(u)}s\,\widehat{\dd G}(s)\dd s\big|
\le2\ga\ii\{|u|\le1+\ga\}\max_{|t|\le1}|\hat p'(t)|$ for all real $u$. 

Note that the first integral in \eqref{eq:hat pG} is a convolution. 
One can also integrate by parts to represent $\widehat{pG}$ as a convolution-smoothing of the derivative $\hat p'$: 
\begin{equation}
	\widehat{pG}=\tfrac i{2\pi}\, \hat p' * \widetilde{\widehat{\dd G}}, 
\end{equation}
where
\begin{equation}\label{eq:tilhat dG}
	\widetilde{\widehat{\dd G}}(t):=\pv\int_{-\infty}^t \widehat{\dd G}(s)\frac{\dd s}s
	:=\lim_{\vp\downarrow0}\int_{(-\infty,t)\setminus(-\vp,\vp)}\widehat{\dd G}(s)\frac{\dd s}s 
	=\int_{-\ga}^{-|t|} \widehat{\dd G}(s)\frac{\dd s}s\,\ii\{|t|<\ga\}
\end{equation}
for all real $t$. 
This follows because for all real $u$ 
\begin{align*}
\int_\R \frac{\hat p(u-s)-\hat p(u)}s\,\widehat{\dd G}(s)\dd s
&=\int_\R \widehat{\dd G}(s)\frac{\dd s}s 
\Big(\int_u^{u-s}\hat p'(v)\dd v\,\ii\{s<0\}-\int_{u-s}^u \hat p'(v)\dd v\,\ii\{s>0\}\Big)\\ 
&=\int_u^\infty \hat p'(v)\dd v \int_{-\infty}^{u-v}\widehat{\dd G}(s)\frac{\dd s}s 
- \int_{-\infty}^u \hat p'(v)\dd v \int_{u-v}^\infty\widehat{\dd G}(s)\frac{\dd s}s \\
&=\int_\R \hat p'(v)\dd v \widetilde{\widehat{\dd G}}(u-v),   
\end{align*}
since the function $\widehat{\dd G}$ is even. 
The latter condition or \eqref{eq:tilhat dG} also shows that the function $\widetilde{\widehat{\dd G}}$ is even.  
Moreover, since $\widehat{\dd G}(s)\to1$ as $s\to0$, \eqref{eq:tilhat dG} yields 
\begin{equation}
	\widetilde{\widehat{\dd G}}(t)\sim\ln|t| 
\end{equation}
as $t\to0$; thus, the function $\widetilde{\widehat{\dd G}}$ is mildly singular in a neighborhood of $0$. 
For instance, $\widetilde{\widehat{\dd G}}(t)\equiv(1-|t|+\ln|t|)(\ii\{|t|<1\}$ if 
$\widehat{\dd G}(t)\equiv(1-|t|)(\ii\{|t|<1\}$. 

Note also that in the case \big(prevented by the condition \eqref{eq:hat dG=0}\big) when $\widehat{\dd G}=1$ on $\R$, the function 
$-2i\,\widehat{pG}$ would be the Hilbert transform of the function $\hat p$; see e.g.\ \cite[Chapter~V]{titch_Fourier}. 

It follows from \eqref{eq:M1,M2 temp} and \eqref{eq:hat pG=0} that 
\begin{equation}\label{eq:M2=0,ga}
	M_2(t)=0\quad\text{if}\quad|t|>1+\ga.     
\end{equation}
This condition on $M_2$ is obviously weaker than the condition 
\begin{equation}\label{eq:M2=0}
	M_2(t)=0\quad\text{if}\quad|t|>1,      
\end{equation}
following from \eqref{eq:M=0} and \eqref{eq:M1,M2}. 
However, by \eqref{eq:M1,M2 temp} and \eqref{eq:hat p=0}, one still has $M_1(t)=0$ if $|t|>1$, whereas the condition \eqref{eq:M2=0} was used in the \emph{quick proof} beginning on page~\pageref{quick-proof} only to bound two terms, $\GG_{2\al}(f_{31})$ and $\GG_{2\al}(f_{32})$. 
Therefore, one may expect the adverse impact of the weakening of the condition \eqref{eq:M2=0} to \eqref{eq:M2=0,ga} to be rather limited and likely more than compensated for by the advantages provided by the more 
flexible construction of the smoothing filter $M$, with the tempered tilting of $M_1$.  
Moreover, the latter construction is, essentially, more general, Indeed, for instance,  
one may always include $G$ into the scale family $(G_\al)_{\al>0}:=\big(G(\frac\cdot\al)\big)_{\al>0}$, and then the tempered tilting \eqref{eq:tp} will be close to the harsh tilting \eqref{eq:tpBo} for large $\al>0$  provided that $G'(0)\ne0$. Indeed, $G_\al(x)\sim\frac{G'(0)}\al\,x$ for each real $x\ne0$ as $\al\to\infty$; of course, at that the value of $\ka=\ka_\al$ in \eqref{eq:tp} with $G=G_\al$ will be quite different from that in \eqref{eq:tpBo}; in fact, the value of $\ka_*$ in \eqref{eq:ka_* temp} with $G=G_\al$ will then be asymptotically equivalent to the value of $\ka_*$ in \eqref{eq:ka_* Bo} times $\frac\al{G'(0)}$, provided that $\int_\R|x|p(x)\dd x<\infty$. 
At that, the value of $\ga$ in \eqref{eq:hat dG=0} for $G$ will be replaced by the corresponding value $\ga_\al:=\frac\ga\al$ for $G_\al$, so that $\ga_\al\to0$ as $\al\to\infty$.

\bibliographystyle{abbrv}


\bibliography{C:/Users/Iosif/Dropbox/mtu/bib_files/citations12.13.12}
\end{document}